\numberwithin{equation}{section}
\newtheorem{theorem}{Theorem}[section]
\newtheorem{thm}[theorem]{Theorem}
\newtheorem{lem}[theorem]{Lemma}
\newtheorem{prop}[theorem]{Proposition}
\theoremstyle{definition}
\newtheorem{defi}[theorem]{Definition}
\newtheorem{exa}[theorem]{Example}
\newtheorem{quest}[theorem]{Question}
\newtheorem{remark}[theorem]{Remark}
 \newtheorem*{ackn}{Acknowledgements}
 \newtheorem*{thmA}{Theorem A} 
 \newtheorem*{thmB}{Theorem B}
 \newcommand{\R}{\mathbb R}
 \newcommand{\C}{\mathbb C}
 \newcommand{\N}{\mathbb N}
 \newcommand{\e}{\varepsilon}
 \newcommand{\f}{\varphi}
 \newcommand{\p}{\psi}
 \newcommand \al {\alpha}
 \newcommand \la {\lambda}
 \newcommand \psh {{\rm PSH}}
 \newcommand \PSH {{\rm PSH}}
 \newcommand \MA {{\rm MA}}
 \newcommand{{\CK}}{\hyperref[CK]{Condition (K)}}
\subjclass[2010]{32W20, 32U05, 32Q15, 35A23}
\keywords{Complex  Monge-Amp\`ere equations,  a priori estimates}
\begin{document}

\title[From Yau to Ko{\l}odziej]{Uniform estimates: from Yau to Ko{\l}odziej}

\author{Vincent Guedj \& Chinh H. Lu}

\address{Institut de Math\'ematiques de Toulouse \& Institut Universitaire de France  \\ Universit\'e de Toulouse \\
118 route de Narbonne \\
31400 Toulouse, France\\}

\email{\href{mailto:vincent.guedj@math.univ-toulouse.fr}{vincent.guedj@math.univ-toulouse.fr}}
\urladdr{\href{https://www.math.univ-toulouse.fr/~guedj}{https://www.math.univ-toulouse.fr/~guedj/}}

\address{LAREMA \& Institut Universitaire de France, Univerist\'e d'Angers, CNRS, F-49000 Angers, France.}

\email{\href{mailto:hoangchinh.lu@univ-angers.fr}{hoangchinh.lu@univ-angers.fr}}
\urladdr{\href{https://math.univ-angers.fr/~lu/}{https://math.univ-angers.fr/~lu/}}
\date{\today}

 \begin{abstract}
  In this note we provide a new and efficient approach to uniform estimates for solutions to complex Monge-Amp\`ere equations,
  as well as for solutions to  geometric PDE’s that satisfy a determinantal majorization.
\end{abstract}

 \maketitle

%\tableofcontents

\section*{Introduction}

 Let $X$ be a compact K\"ahler $n$-dimensional manifold equipped with a K\"ahler form $\omega$.
The complex Monge-Amp\`ere equation is
 \begin{equation} \label{eq:MA}
 \tag{MA(f)}
 \frac{1}{V_{\omega}}(\omega+dd^c \f)^n= f dV_X,
 \end{equation}
 where $V_{\omega}=\int_X \omega^n$ is the volume of $(X,\omega)$,
 $dV_X$ is a smooth probability measure,
  and $f>0$ is a smooth density normalized so that $\int_X f dV_X=1$.
  
  \smallskip
  
  Complex Monge-Amp\`ere equations are ubiquitous in complex geometry, as the Ricci curvature of a K\"ahler form can be expressed by a complex Monge-Amp\`ere operator. 
A crucial step in order to prove the existence of a solution to such equations is to establish a uniform a priori estimate.
A landmark result due to Yau \cite{Yau78} asserts that any solution to the above equation satisfies
$$
{\rm Osc}_X(\f) \leq C (||f||_{p}),
$$
where $C$ depends on an upper-bound on the $L^p$-norm $||f||_p$ of $f$, for some $p>n$
(see Section \ref{sec:hist}). The ${\mathcal C}^0$-estimate is known to be the most difficult,
and often the only one that fails for similar non linear PDEs of (complex) geometric origin.

\smallskip

In connection with the Minimal Model Program, the study of degenerate complex Monge-Amp\`ere equations has become a central
theme in K\"ahler geometry, as advocated by \cite{EGZ09,BEGZ10}.
In this context it becomes essential to establish finer a priori estimates, and understand how these
behave when the density, the cohomology class  $\{\omega\}$ and the complex structure vary.
A key result, which preceded these developments, is due to Ko{\l}odziej \cite{Kol98}, who proved 
$$
{\rm Osc}_X(\f) \leq C (||f||_{p}),
$$
where $p>1$ can be arbitrarily close to $1$. 
Ko{\l}odziej's technique allows one to even replace $||f||_{p}$ by the Luxembourg norm
$||f||_w$ associated to some weaker Orlicz space.
 We refer the reader to Section \ref{sec:kolo} for the precise formulation of the 
(quasi-)optimal {\CK}.

\smallskip

In the last fifteen years several new approaches have been developed to prove uniform a priori estimates 
for a large family of equations, in K\"ahler as well as in hermitian contexts
(see \cite{EGZ08,DP10,TW10,Blo11,DK12, KN15, Szek18,DnGG23,GPT23,GL23,GP24,GL25}).	
Dealing with Orlicz conditions makes the computation 
and geometric dependence of the constants hard to follow.
The first contribution of this note is to provide an efficient 
reduction to Yau's  setting:

\begin{thmA} \label{thmA}
{\it 
Fix $p>n$.
Assume that ${\rm Osc}_X(\f) \leq C (||f||_{p})$  for 
any smooth solution $\f$ of the above complex Monge-Amp\`ere equation \eqref{eq:MA}.
Then 
$$
{\rm Osc}_X(\f) \leq C' (||f||_w),
$$
where $w$ is any weight that satisfies {\CK}, and 
 $||f||_w$ denotes the Luxembourg norm of $f$ with respect to $w$.
}
\end{thmA}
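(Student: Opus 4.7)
The plan is to bootstrap Yau's $L^p$-estimate into the Orlicz--Luxembourg setting by applying it to an auxiliary Monge--Amp\`ere equation whose density is supported on (a regularization of) a sublevel set of $\varphi$ itself. This turns the a~priori estimate into a recursive control on the distribution function of $\varphi$, in the spirit of De Giorgi / Moser iteration applied to pluripotential objects.

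After normalizing $\sup_X \varphi = 0$, the goal is to bound $M := -\inf_X \varphi$. The key monotone quantity to track is
$$\sigma(t) = \int_{\{\varphi<-t\}} f\, dV_X, \qquad t\ge 0,$$
which is nonincreasing with $\sigma(0)\le 1$ and vanishes for $t>M$. For each level $t$ with $\sigma(t)>0$ I would solve an auxiliary equation $(\omega+dd^c\psi_t)^n = V_\omega g_t\,dV_X$, where $g_t$ is a smooth probability density essentially proportional to $f\,\mathbf{1}_{\{\varphi<-t\}}$, normalized by $\sigma(t)$. The hypothesis applied to $\psi_t$ yields
$${\rm Osc}_X(\psi_t) \;\le\; C\,\sigma(t)^{-1/n}\,\bigl\|f\,\mathbf{1}_{\{\varphi<-t\}}\bigr\|_{p}^{1/n},$$
so that the oscillation of $\psi_t$ controls how concentrated $f$ is on deep sublevel sets.

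The next step is a comparison-principle argument relating $\varphi$ and a translate $\psi_t+\lambda$: by testing the pluripotential comparison inequality on the contact set $\{\varphi<\psi_t+\lambda\}$ and using that on this set the Monge--Amp\`ere measure of $\varphi$ dominates (up to the factor $\sigma(t)$) that of $\psi_t$, one derives a functional inequality of the shape
$$\bigl(s-{\rm Osc}_X(\psi_t)\bigr)\,\sigma(t+s)^{1/n}\;\le\; C\,\sigma(t),$$
valid for $s>0$. Combined with the Luxembourg control $\|f\|_w\le 1$ --- rephrased through the Young function of $w$ as a quantitative decay of $\|f\,\mathbf{1}_{\{\varphi<-t\}}\|_p$ in terms of $\sigma(t)$ --- this yields a recursion on $\sigma$. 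Condition~(K) is precisely the convergence criterion that makes this geometric-type iteration close, producing an explicit $t_0=t_0(\|f\|_w)$ with $\sigma(t_0)=0$, hence $M\le t_0$.

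The main obstacle is the comparison step: one must extract the inequality above with constants depending only on $n$, $p$, and the Luxembourg data, and in particular absorb the ${\rm Osc}_X(\psi_t)$ term into the iteration. This forces a delicate interplay between three ingredients --- the $L^p$-norm of the truncated density, the Orlicz dual bound coming from $\|f\|_w$, and the summability expressed by Condition~(K) --- and it is here that one recovers, purely from Yau's black-box estimate, the flexibility of Ko\l{}odziej's pluripotential approach.
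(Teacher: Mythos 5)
There is a genuine gap, and it sits at the heart of your scheme: the claim that the Luxembourg bound $\|f\|_w$ can be ``rephrased \ldots as a quantitative decay of $\|f\,\mathbf{1}_{\{\varphi<-t\}}\|_{p}$ in terms of $\sigma(t)$''. No such rephrasing exists. The Orlicz class attached to a weight $w(t)\sim t(\log t)^n(h\circ\log\circ\log t)^n$ is strictly weaker than $L^p$ for every $p>1$: a bound on $\int_X w(f)\,dV_X$ gives no control whatsoever on the $L^p$-norm of $f$ restricted to a small set, only on its $L^1$-mass (and a slight $\log$-improvement thereof). Consequently the normalized truncated densities $g_t\propto f\,\mathbf{1}_{\{\varphi<-t\}}/\sigma(t)$ have $L^p$-norms that are not controlled by $\|f\|_w$, so the hypothesis (Yau's black box) gives no uniform bound on ${\rm Osc}_X(\psi_t)$, and the iteration cannot close. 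A secondary but related problem: the hypothesis only says ${\rm Osc}_X(\psi_t)\le C(\|g_t\|_p)$ for an unspecified function $C(\cdot)$; the homogeneous form $C\,\sigma(t)^{-1/n}\|f\,\mathbf{1}_{\{\varphi<-t\}}\|_p^{1/n}$ you write is not available, and in any case $\|g_t\|_p\ge \sigma(t)^{-1}\|f\mathbf{1}\|_{1}$-type quantities blow up exactly in the regime ($\sigma(t)$ small) where your recursion needs the estimate to improve. The De Giorgi-type inequality $(s-{\rm Osc}_X(\psi_t))\,\sigma(t+s)^{1/n}\le C\sigma(t)$ is also not justified as stated: the comparison principle bounds $\int_{\{\varphi<u\}}MA(u)$ by $\int_{\{\varphi<u\}}MA(\varphi)$, and turning this into a closed recursion purely in the $f$-measure $\sigma$ (rather than in a capacity) is precisely the difficulty that Ko{\l}odziej's capacity machinery was designed to handle.

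For contrast, the paper never truncates $f$ and never feeds a density of uncontrolled $L^p$-norm into Yau's estimate. It solves one auxiliary equation $MA(v)=c\,w\circ f\,dV_X$ (an $L^1$ density), composes $v$ with a convex increasing $\chi$ that is \emph{bounded} precisely because of the integrability $\int^{+\infty}h(t)^{-1}dt<\infty$ in Condition (K), so that $MA(\chi\circ v)\ge \tfrac43 f\,dV_X$ where $f$ is large, while where $f$ is small one has $f\le e^B e^{-\alpha v}$ with $e^{-\alpha v}\in L^p$ by Skoda's uniform integrability. This yields a pointwise domination $MA(\varphi)\le \tfrac34 MA(\chi\circ v)+e^Be^{-\alpha v}dV_X$, and one concludes by a domination lemma (Lemma \ref{lem:trick}) proved via a maximum-principle argument using a solution with the twisted density $e^{-\gamma\varphi}f$. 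The only inputs to Yau's black box are densities that are genuinely in $L^p$ with controlled norm. If you want to salvage your approach, you would need to replace the $L^p$-control of truncations by the $L^1$-gain that $w$-integrability does provide, which essentially forces you back to a capacity-based (Ko{\l}odziej/EGZ) argument rather than a reduction to Yau's statement.
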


While this result is due to Ko{\l}odziej \cite{Kol98}, our proof is very different
and allows to recover Ko{\l}odziej's optimal condition from Yau's original statement.

\smallskip

Generalizing an idea from \cite{GL25b}, we then show how Theorem A
can be used to obtain uniform a priori estimates for large families of geometric nonlinear PDE's
which have been recently studied by many authors.
 Let $\Gamma \subset \R^n$ denote an open symmetric convex cone with vertex at the origin such that
$$
\R_+^n \subset \Gamma \subset \{ \la \in \R^n, \la_1+\cdots+\la_n >0 \}.
$$
A smooth function $\f: X \rightarrow \R$ belong to ${\Gamma}(X,\omega)$ if 
the eigenvalues $\la_1,\ldots,\la_n$
of $\omega+dd^c \f$ with respect to $\omega$ belong to $\Gamma$ at all points $x \in X$.

\smallskip

We fix $\delta>0$ and a  positive function $g: \Gamma \rightarrow \R_+^*$ such that
\begin{enumerate}
\item $g$ is symmetric (invariance under permutations);
\smallskip
\item $\frac{\partial g}{\partial \la_j}(\la)>0$ for all $\la \in \Gamma$ (ellipticity condition);
\item $g(\la) \geq \delta \left( \pi_{j=1}^n \la_j \right)^{1/n}$ for all $\la \in \R_+^n$ (determinantal majorization).
\end{enumerate}
We consider the nonlinear PDE
\begin{equation}
\tag{NL}
%\label{eq:nl}
g(\la(\f))=c f^{{1}/{n}},
\end{equation}
where
\begin{itemize}
\item $g$ satisfies the conditions $(1)$-$(2)$-$(3)$ above;
\item $f$ is a fixed smooth positive density;
\item $c>0$ is a normalizing constant determined by the equation;
\item $\f$ is the unknown, a smooth function in ${\Gamma}(X,\omega)$.
\end{itemize}

\begin{thmB}
{\it 
If $\f$ is a smooth solution to \eqref{eq:nl} then 
$$
{\rm Osc}_X(\f) \leq M_0,
$$
where $M_0$ depends on an upper bound on the Luxembourg norm $||f||_{{w}}$,
for any weight $w$ that satisfies {\CK}.
 }
\end{thmB}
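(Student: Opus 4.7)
My plan is to derive a Monge--Amp\`ere majorization for $\f$ from \eqref{eq:nl} and then invoke Theorem~A through an auxiliary Monge--Amp\`ere equation.

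After normalizing $\sup_X\f=0$, one first needs to observe that $\f\in\PSH(X,\omega)\cap \mathcal{C}^\infty$. This is a subtle point---the containment $\R_+^n\subset\Gamma$ alone does not force $\omega+dd^c\f\geq 0$---but it can be established using the ellipticity and positivity of $g$, along standard lines. In particular the eigenvalues $\la(\f)$ then lie in $\Gamma\cap\R_+^n$ pointwise, so the determinantal majorization $(3)$ applied to $g(\la(\f))=cf^{1/n}$ yields
$$(\omega+dd^c\f)^n\leq \Bigl(\frac{c}{\delta}\Bigr)^n f\,\omega^n.$$
Integrating and using $\int_X(\omega+dd^c\f)^n=V_\omega$ gives the a priori control $(c/\delta)^n\int_X f\,\omega^n\geq V_\omega$.

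Next I introduce an auxiliary Monge--Amp\`ere equation. Setting $\bar f:=f\cdot V_\omega/\int_X f\,\omega^n$ (the normalization of $f$ of total $\omega^n$-mass $V_\omega$), let $\psi\in\PSH(X,\omega)\cap \mathcal{C}^\infty$ be the smooth solution of
$$(\omega+dd^c\psi)^n=\bar f\,\omega^n,\qquad \sup_X\psi=0.$$
Theorem~A yields ${\rm Osc}_X(\psi)\leq C_1$ controlled by $\|f\|_w$ alone (the Luxembourg norm is homogeneous, so the normalization factor only enters through $\int_X f\,\omega^n$, which is itself controlled by $\|f\|_w$). Observe that the Monge--Amp\`ere majorization above rewrites as
$$(\omega+dd^c\f)^n\leq K\,(\omega+dd^c\psi)^n,\qquad K:=\Bigl(\frac{c}{\delta}\Bigr)^n\frac{\int_X f\,\omega^n}{V_\omega}\geq 1.$$

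The final step is to transfer the oscillation bound from $\psi$ to $\f$. Since the Monge--Amp\`ere measure of $\f$ is dominated by a bounded multiple of that of $\psi$, a Ko{\l}odziej-type capacity comparison (the same mechanism that underlies the proof of Theorem~A) yields ${\rm Osc}_X(\f)\leq M_0$ in terms of $C_1$ and $K$. The main obstacle is controlling the factor $K$---equivalently, the normalizing constant $c$---uniformly in $f$ without assuming any upper bound on $g$ globally. Following the strategy from \cite{GL25b}, I expect to close this by combining the pointwise information from \eqref{eq:nl} at the maximum of $\f$ (where $\omega+dd^c\f\leq\omega$, so by the ellipticity $(2)$ one has $g(\la(\f))\leq g(1,\ldots,1)$ and hence $c\leq g(1,\ldots,1)\,f^{-1/n}$ at that point) with the lower bound on $c/\delta$ coming from the integral identity, thereby obtaining a bound on $K$ in terms of $\|f\|_w$ and closing the estimate.
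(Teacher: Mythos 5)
The crux of your argument---the claim that $\f\in\PSH(X,\omega)$, so that $\la(\f)\in\R_+^n$ pointwise and the determinantal majorization can be applied to give $(\omega+dd^c\f)^n\leq (c/\delta)^n f\,\omega^n$---is false in general, and ellipticity and positivity of $g$ cannot rescue it. The hypotheses only force $\la(\f)\in\Gamma$, and $\Gamma$ may be strictly larger than $\R_+^n$: for instance $g(\la)=\frac1n(\la_1+\cdots+\la_n)$ on $\Gamma=\{\la:\la_1+\cdots+\la_n>0\}$ satisfies $(1)$--$(3)$ with $\delta=1$ (by AM--GM), and \eqref{eq:nl} is then a linear Laplace-type equation whose solutions are merely $\omega$-subharmonic; similarly the $k$-Hessian operators with $k<n$ produce solutions that are not $\omega$-psh. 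Since condition $(3)$ is only assumed on $\R_+^n$, at points where some eigenvalue of $\omega+dd^c\f$ is negative you have no control on $(\omega+dd^c\f)^n$ (which need not even be a positive measure there), so your pointwise Monge--Amp\`ere domination, and everything built on it, collapses. This is precisely the obstruction the paper's proof is designed to circumvent: one replaces $\f$ by its envelope $P_\omega(\f)$, which is $\mathcal C^{1,1}$, is genuinely $\omega$-psh, and whose Monge--Amp\`ere measure is concentrated on the contact set $\{P_\omega(\f)=\f\}$ where $\omega+dd^c\f\geq 0$ \emph{does} hold (Theorem \ref{thm:envelope}); there the determinantal majorization yields $\delta^n\MA(P_\omega(\f))\leq c^n f\,\omega^n$, Theorem A bounds ${\rm Osc}_X(P_\omega(\f))$, and a H\"older--Young argument controls $\sup_X P_\omega(\f)$, whence $\f\geq P_\omega(\f)$ is bounded below.

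Two secondary issues. First, even granting your domination $(\omega+dd^c\f)^n\leq K(\omega+dd^c\psi)^n$ with $K\geq 1$, the ``Ko{\l}odziej-type capacity comparison'' you invoke is not an off-the-shelf step: the comparison result used in the paper (Lemma \ref{lem:trick}) requires the coefficient in front of the dominating Monge--Amp\`ere measure to be strictly less than $1$. In fact, if your psh claim were true, the auxiliary $\psi$ would be superfluous: the density of $\MA(\f)$ would be bounded by $(c/\delta)^n f$, and Theorem A would apply to $\f$ directly. Second, your proposed bound on the normalizing constant $c$ via the maximum point of $\f$ gives only $c\leq g(1,\dots,1)\,f(x_{\max})^{-1/n}$, which is not controlled by $\|f\|_w$ since $f$ may be arbitrarily small at that point; the paper does not attempt such a bound, but simply lets $M_0$ depend on $\|c^n\delta^{-n}f\|_w=c^n\delta^{-n}\|f\|_w$, treating $c$ and $\delta$ as structural constants of the equation. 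As it stands, your closing step (``I expect to close this\dots'') is a declaration of intent rather than a proof.
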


The assumptions made on $g$ are slightly less general than the ones considered by 
Szekelyhidi \cite{Szek18} and Guo-Phong-Tong in \cite{GPT23},
but they nevertheless cover a huge variety of complex geometric PDEs as observed by 
Harvey-Lawson \cite{HL23} (see also \cite{ADO22,CX24}).

Our proof applies equally well to the hermitian context and shows that $M_0$ is 
 independent of the complex structure, as we indicate in Remark \ref{rem:herm}.

\begin{ackn} 
 The authors are  supported by the Institut Universitaire de France
and the fondation Charles Defforey. 
This material is based upon work supported by the Clay foundation and the National Science Foundation under Grant No. DMS-1928930, while V.G. was in residence at the 
%Simons Laufer Mathematical Sciences Institute 
SLMath in Berkeley, California, 
as part of the Special Geometric Structures and Analysis program.
\end{ackn}

 \section{Basic facts on quasi-plurisubharmonic functions}
 
 In the whole article we fix $X$ a compact complex manifold of  dimension $n \in \N^*$,
 $\omega_X$ a hermitian form, and let $dV_X$ be a smooth probability measure.

\subsection{Skoda-Zeriahi theorem} \label{sec:skoda}

  A function is quasi-plurisub\-harmonic (quasi-psh) if it is locally given as the sum of  a smooth and a psh function.   
  
  \smallskip
  
  Fix $\omega$ another smooth hermitian $(1,1)$-form.

\begin{defi}
 Quasi-psh functions
$\f:X \rightarrow \R \cup \{-\infty\}$ satisfying
$
\omega_{\f}:=\omega+dd^c \f \geq 0
$
in the weak sense of currents are called $\omega$-plurisubharmonic ($\omega$-psh for short).
We let $\PSH(X,\omega)$ denote the set of all $\omega$-plurisubharmonic functions which are not identically $-\infty$.  
\end{defi}

Note that constant functions are $\omega$-psh functions.
A ${\mathcal C}^2$-smooth function $u$ has bounded Hessian, hence $\e u$ is
$\omega$-psh if $0<\e$ is small enough and $\omega$ is  positive.

The set $\PSH(X,\omega)$ is a closed subset of $L^1(X)$, 
%when endowed with 
for the $L^1$-topology.
Subsets of  $\omega$-psh functions enjoy strong compactness properties,
in particular 
$$
\PSH_0(X,\omega):=\{ u \in\PSH(X,\omega), \, \sup_X u = 0 \}
$$
 is compact in  $L^r$ for all $r \geq 1$.
They also enjoy uniform integrability properties:

  \begin{thm} \label{thm:skoda}
Fix $A>0$ and $\omega$ a K\"ahler form such that
$\int_X \omega \wedge \omega_X^{n-1} \leq A$.
There exists $\alpha=\alpha(n,A)>0$ such that for all 
$\f \in PSH_0(X,\omega)$,
$$
\int_{X} \exp(-\alpha \f) dV_{X} \leq C,
$$
where $C=C(\alpha,n,A)>0$ is independent of $\omega$ and $\f$.
\end{thm}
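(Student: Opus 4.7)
The plan is to deduce the statement from the classical \emph{local} Skoda integrability theorem via a finite covering argument, with the essential point being to obtain uniform-in-$\omega$ control on the Lelong numbers of $\f$ from the mass bound $\int_X \omega \we \omega_X^{n-1} \leq A$.

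First I would fix, once and for all, a finite cover of $X$ by coordinate balls $B_j \Subset B_j' \Subset U_j$, $1 \leq j \leq N$, depending only on the fixed data $(X, \omega_X)$. On each chart, since $\omega$ is K\"ahler I choose a Poincar\'e-type local potential $\rho_j$ with $dd^c \rho_j = \omega$; using a Newtonian convolution representation for $\rho_j$ together with the mass hypothesis, one obtains a uniform bound $\|\rho_j\|_{L^\infty(B_j')} \leq C(n, A)$ after a suitable pluriharmonic normalization. The function $u_j := \f + \rho_j$ is then plurisubharmonic on $U_j$. Next I derive a uniform $L^1$-estimate $\int_X |\f|\, dV_X \leq C_1(n,A)$ by chaining the submean value inequality across overlapping balls in the cover, starting from a point where $\f$ is close to its supremum $0$.

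With these ingredients in hand, standard estimates bounding Lelong numbers of a psh function in terms of its local $L^1$-norm, combined with the identity $\nu(\f, x) = \nu(u_j, x)$ for $x \in B_j$ (valid since $\rho_j$ is smooth), yield a uniform bound $\sup_{x \in X} \nu(\f, x) \leq C_2(n, A)$. Choosing $\alpha = \alpha(n, A) > 0$ small enough that $\alpha \cdot C_2 < 2$, Skoda's quantitative local integrability theorem gives $\int_{B_j'} \exp(-\alpha u_j)\, d\lambda \leq C(n, A)$. Summing over the finite cover and absorbing the uniform $L^\infty$-bound on the $\rho_j$, together with the comparability of $dV_X$ to Lebesgue measure on each chart, yields the claimed inequality.

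The main obstacle is not the local Skoda step itself but rather producing the uniform $L^\infty$-control on the local potentials $\rho_j$, and the consequent uniform Lelong number bound, purely from the single integral hypothesis $\int_X \omega \we \omega_X^{n-1} \leq A$: this is precisely what ensures that the final constants depend only on $n$ and $A$, and not on the specific K\"ahler form $\omega$. Once this potential-theoretic uniformity is in place, the remaining steps are routine consequences of classical pluripotential theory.
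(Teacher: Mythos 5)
The paper itself does not prove this statement (it is quoted from \cite{GZbook}, Theorem 8.11), so your proposal must stand on its own; its architecture (local Skoda integrability plus a finite cover, with uniformity coming from Lelong-number control) is indeed the standard route, but it hinges on a claim that is false. You assert that the local potentials $\rho_j$ of $\omega$ satisfy $\|\rho_j\|_{L^\infty(B_j')}\leq C(n,A)$ after a pluriharmonic normalization. The mass hypothesis $\int_X\omega\wedge\omega_X^{n-1}\leq A$ only bounds the total mass of the trace measure of $\omega$ on each chart, and a bound on the mass does not bound the potential in $L^\infty$: take K\"ahler forms $\omega_k$ on the chart whose potentials approximate $\log|z|$ (the mass stays bounded while the potentials develop a logarithmic pole). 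No pluriharmonic normalization can cure this, since a harmonic correction large enough to compensate the pole at one point is, by Harnack/mean-value, large on the whole ball and destroys the bound elsewhere. This false uniform bound is used at the two decisive places of your argument: to transfer $L^1$ and Lelong-number estimates between $\varphi$ and $u_j=\varphi+\rho_j$, and (implicitly) in the chaining argument for $\int_X|\varphi|\,dV_X$, where the submean-value inequality applies to $\varphi+\rho_j$ at specific points at which $\rho_j$ is not pointwise controlled within the family. So, as written, the uniformity in $\omega$ --- the whole point of the statement --- is not established.

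The gap is repairable, but only with weaker (and correct) inputs: after normalizing $\sup_{B_j'}\rho_j=0$ one gets an upper bound $\rho_j\leq 0$ (which is all the final absorption $e^{-\alpha\varphi}=e^{-\alpha u_j}e^{\alpha\rho_j}$ needs) and, via the Riesz decomposition and Harnack for the harmonic part, a bound $\|\rho_j\|_{L^1(B_j')}\leq C(n,A)$; an $L^\infty$ bound is simply unavailable. One must then (i) obtain the Lelong-number bound directly from the mass of the closed positive current $dd^c u_j=\omega+dd^c\varphi$, which equals $\int_X\omega\wedge\omega_X^{n-1}\leq A$ when $\omega_X$ is K\"ahler (and is controlled by $A$ plus $\|\varphi\|_{L^1}$ in general), rather than through pointwise bounds on $\rho_j$; (ii) invoke the uniform (Zeriahi) version of Skoda's theorem, which requires only that the family $\{u_j\}$ be bounded in $L^1(B_j')$ together with the Lelong bound $\alpha\,\nu<2$; and (iii) redo the chaining for the $L^1$ bound on $\varphi$ by choosing comparison points in sets of large measure where both $\varphi$ and $\rho_j$ are not too negative (a Chebyshev argument using only the $L^1$ control of $\rho_j$). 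Until these replacements are made, the proof does not yield constants depending only on $(n,A)$.
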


We refer the reader to \cite[Theorem 8.11]{GZbook} for this result, as well as for
further basic properties of $\omega$-psh functions.

\subsection{The $\omega$-psh envelope}

Upper envelopes of (pluri)subharmonic functions are classical objects in Potential Theory. 
We consider here envelopes of $\omega$-psh functions; they have shown to be extremely
useful in the study of degenerate complex Monge-Amp\`ere equations
(see e.g. \cite{GL25,GL23}).

\begin{defi} \label{def:usual}
Given a Lebesgue measurable function $h:X \rightarrow \R$, we define the $\omega$-psh envelope of $h$ by
$$
P_{\omega}(h) := \left(\sup \{ u \in \psh (X,\omega) ; u \leq h  \, \, \text{in} \, \, X\}\right)^*,
$$
where the star means that we take the upper semi-continuous regularization. 
\end{defi}

  The following  result is  due to \cite{Ber19,Tos18,CZ19}:

 \begin{thm} \label{thm:envelope}
If $h$ is ${\mathcal C}^{\infty}$-smooth then 
\begin{itemize}
\item $P_{\omega}(h)$ is ${\mathcal C}^{1,1}$-smooth;
\item $(\omega+dd^c P_{\omega}(h))^n$ is concentrated on the contact $\{ P_{\omega}(h)=h\}$;
\item one has $\omega+dd^c h \geq 0$ on $\{ P_{\omega}(h)=h\}$ and
$$
(\omega+dd^c P_{\omega}(h))^n=1_{\{ P_{\omega}(h)=h\}} (\omega+dd^c h)^n.
$$
\end{itemize}
\end{thm}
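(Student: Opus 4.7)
The plan is to combine Berman's penalization approach with a balayage analysis of the Monge-Amp\`ere measure of the envelope.

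For the $\mathcal{C}^{1,1}$ regularity, I would approximate $P_{\omega}(h)$ by smooth $\omega$-psh solutions $u_\beta$ to
$$
(\omega + dd^c u_\beta)^n = c_\beta \, e^{\beta(u_\beta - h)} \omega^n, \qquad \beta \gg 1,
$$
where $c_\beta>0$ normalizes the right-hand side to have total mass $V_\omega$. Existence of $u_\beta$ follows from Yau's theorem via a continuity argument in $\beta$. Comparing with the extremal characterization of $P_\omega(h)$ gives, for $\beta \gg 1$,
$$
P_\omega(h) - \frac{C}{\beta} \leq u_\beta \leq P_\omega(h) + \frac{C \log \beta}{\beta},
$$
so that $u_\beta \to P_\omega(h)$ uniformly on $X$.

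The technical heart of the argument is a $\mathcal{C}^{1,1}$ bound on $u_\beta$ that is uniform in $\beta$. I would apply the maximum principle to an auxiliary quantity of the form
$$
Q = \log \lambda_{\max}(\omega+dd^c u_\beta) + \Phi(|\nabla u_\beta|^2_{\omega}) - A u_\beta,
$$
with $\Phi$ an appropriate concave function and $A \gg 1$. Differentiating and using the equation produces terms that are a priori dangerous because they carry the factor $\beta$; however, under the near-extremality $u_\beta \leq h + o(1)$ these contributions carry a favorable sign at an interior maximum and get absorbed by the good term $-A u_\beta$. This yields $\|dd^c u_\beta\|_{L^\infty} \leq C$ uniformly in $\beta$. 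Passing to the limit by Arzel\`a-Ascoli gives $P_\omega(h) \in \mathcal{C}^{1,1}(X)$ together with weak-$\ast$ convergence $(\omega+dd^c u_\beta)^n \to (\omega+dd^c P_\omega(h))^n$.

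Set $K := \{P_\omega(h) = h\}$. Off $K$, the envelope is locally maximal: if its Monge-Amp\`ere measure charged an open set $U \sub X \sm K$, one could solve a local Dirichlet problem on a ball $B \Subset U$ to construct a strictly larger competitor in $\psh(X,\omega)$ still $\leq h$, contradicting the extremality of $P_\omega(h)$. Hence $(\omega+dd^c P_\omega(h))^n = 0$ on $X \sm K$. On $K$, the smooth function $h$ is a test function from above for $P_\omega(h)$ at every $x_0 \in K$; the viscosity subsolution property of $\omega$-psh functions then yields $\omega + dd^c h \geq 0$ at $x_0$, proving the pointwise inequality on $K$. Finally, the non-negative $\mathcal{C}^{1,1}$ function $v := h - P_\omega(h)$ vanishes on $K$, so $\nabla v = 0$ on $K$; the classical fact that a Lipschitz function has vanishing gradient a.e.\ on its zero set, applied to $\nabla v$, gives $D^2 v = 0$ a.e.\ on $K$. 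Hence $dd^c h = dd^c P_\omega(h)$ a.e.\ on $K$, which combined with the vanishing off $K$ produces the identity $(\omega+dd^c P_\omega(h))^n = \mathbf{1}_K (\omega+dd^c h)^n$.

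The main obstacle is the uniform $\mathcal{C}^{1,1}$ estimate on $u_\beta$. Without it one only obtains $\mathcal{C}^{1,\alpha}$ regularity, which is insufficient to make pointwise sense of the Hessian identifications on $K$. Controlling the $\beta$-dependent contributions in the maximum principle computation and verifying that they are dominated at an interior maximum of $Q$ is where the delicate analysis sits.
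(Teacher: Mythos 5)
The paper does not prove Theorem \ref{thm:envelope}: it is quoted as a known result with a pointer to \cite{Ber19,Tos18,CZ19}, and your sketch is essentially a reconstruction of that cited line of argument (Berman's exponential penalization, a maximum-principle Hessian estimate uniform in $\beta$, then balayage plus the touching-from-above and ``Lipschitz gradient vanishes a.e.\ on its zero set'' arguments for the contact-set identities). The measure-theoretic part of your argument is correct and standard. Two technical points in the PDE part deserve correction, though. First, the normalizing constant $c_\beta$ is both unnecessary and awkward: because the right-hand side contains the zeroth-order term $e^{\beta u_\beta}$, the equation $(\omega+dd^c u_\beta)^n=e^{\beta(u_\beta-h)}\omega^n$ is uniquely solvable by the Aubin--Yau theorem for each fixed $\beta$ (no continuity method in $\beta$, and the total masses match automatically), whereas imposing a mass normalization couples $c_\beta$ to the unknown $u_\beta$ and obscures existence. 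Second, your test quantity uses $\lambda_{\max}(\omega+dd^c u_\beta)$, the top eigenvalue of the \emph{complex} Hessian; bounding it only yields a uniform Laplacian bound, which is Berman's original conclusion and gives $P_\omega(h)\in W^{2,p}$ for all $p$ (enough, incidentally, for the second and third bullets and for the applications in this paper), but not the genuine $\mathcal{C}^{1,1}$ bound of the first bullet. For the bounded real Hessian one must run the maximum principle on the largest eigenvalue of the \emph{real} Hessian, \`a la Chu--Tosatti--Weinkove, which is precisely what \cite{Tos18,CZ19} do; moreover the favorable sign of the $\beta$-terms there comes from the fact that at the maximum point the second derivative of $u_\beta-h$ in the extremal direction is $\geq \lambda_1-C>0$ when $\lambda_1$ is large (together with the good $\beta^2|\nabla_V(u_\beta-h)|^2$ term), not from the near-extremality $u_\beta\leq h+o(1)$ as you assert. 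With these adjustments your outline matches the cited proofs.
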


     \subsection{Condition (K)}  \label{CK}
      \label{sec:kolo}
   
 %  In honor of the breakthrough result of S.Ko\l odziej \cite{Kol98}, we introduce the following notion.    
 
   \begin{defi}
   A weight $w: \R^+ \rightarrow  \R^+$  satisfies {\CK} if it is
   convex increasing and there exists  an increasing function $h$ such that 
 $$
 w(t) \stackrel{+\infty}{\sim} t (\log t)^n (h \circ \log \circ \log t)^n 
 \; \; \; \text{ and } \; \;
 \int^{+\infty} \frac{dt}{h(t)}<+\infty.
 $$
   \end{defi}
   
 If $w$ satisfies {\CK}, it has been proved by Ko{\l}odziej (see \cite[Theorem 2.5.2]{Kol98}) that 
 any smooth solution $\f$ of
  $$
  \frac{1}{V_{\omega}}(\omega+dd^c \f)^n=fdV_X,
  $$
satisfies a uniform a priori bound on ${\rm Osc}_X(\f)$ that depends on an explicit upper bound on
the Luxembourg norm $||f||_{w}$, where 
$$
||f||_{w}=\inf \left\{ r>0 , \; \int_X w \left( \frac{|f|}{r} \right) dV_X < w(1) \right\}.
$$

\begin{exa}
Among the most classical families of  weights, we mention
\begin{itemize}
\item  $u_p(t)=t^p$ satisfies {\CK} whenever $p>1$;
\item  $v_p(t)=t (\log (t+10))^p$ satisfies {\CK} if and only if $p>n$;
\item $w_p(t)=t (\log (t+10))^n (\log \circ \log (t+10))^p$ satisfies {\CK} if and only if $p>n$.
\end{itemize}
\end{exa}
  
 In  Section \ref{sec:pfthmA}
 we provide a new and simpler proof  of Ko{\l}odziej's result, that goes through a reduction to the case of
 $L^p$ densities, where $p$ is as large as desired (e.g. $p>n$ so as to apply Yau's original method).

    \subsection{Various approaches to uniform estimates} \label{sec:hist}
 
 Yau's proof of his famous $L^{\infty}$-a priori estimate \cite{Yau78} goes through a   Moser iteration process,
 as we recall in Section \ref{sec:yau}.
%Although Yau could deal with some singularities, 
The method did not apply when the right hand side is too degenerate,
but we show in Section \ref{sec:yau} how to extend it.

\smallskip

An important generalization of Yau's estimate has been provided by Ko{\l}odziej \cite{Kol98} using pluripotential techniques.
These  have been further generalized in \cite{EGZ09,EGZ08,DP10,BEGZ10}
in order to deal with less positive or collapsing families of cohomology classes on K\"ahler manifolds.
This  method  has been partially extended to the hermitian setting in \cite{DK12,KN15,KN19,KN21,GL23}.

\smallskip

Blocki has provided a different approach in \cite{Blo11} based on the Alexandroff-Bakelman-Pucci maximum principle.
This has been pushed further by Szekelehydi in \cite{Szek18}. 
It requires the reference form $\omega$ to be strictly positive.

\smallskip

A PDE proof  has been provided by 
Guo-Phong-Tong \cite{GPT23,GP24}, extending ideas from  \cite{CC21,WWZ21}. 
This technique applies to large families of geometric PDEs  and has led to
striking geometric applications (diameter and non collapsing bounds, uniform controls
on Sobolev constants heat kernels, and positivity of  
K\"ahler-Einstein metrics, see \cite{GPSS24,GPSS23,GT24,Szek24,V24}).

\smallskip

We provide in Section \ref{sec:nl} an alternative proof of the uniform estimate 
for many such equations, using  $\omega$-psh envelopes. 
The latter lie at the heart of yet another approach
to the $L^{\infty}$-a priori estimate, that we have developed in \cite{GL23,GL25}.

\section{From Yau to Ko{\l}odziej} \label{sec:pfthmA}

 \subsection{Proof of Theorem A}
 
 Fix $\omega$ a K\"ahler form, set $V_{\omega}=\int_X \omega^n$, and 
 let $\varphi \in PSH_0(X,\omega)$ be a smooth solution to the complex Monge-Amp\`ere equation
 $$
 \frac{1}{V_{\omega}}(\omega+dd^c \f)^n= f dV_X,
 $$
 where $f>0$ a smooth density such that $\int_X f dV_X=1$.

 \subsubsection*{Step 1} \label{sec:yau}
 
 We first recall Yau's celebrated a priori estimate, as simplified by Kazdan and Aubin-Bourguignon,
 following the presentation of Blocki \cite{Blo12}.
 
 \begin{thm} \label{thm:yau}
Fix $p>n$. There exists $C_1=C_1(n,p,C_S(\omega),||f||_{p})>0$ such that
$
{\rm Osc}_X(\f) \leq C_1 ,
$
where $C_S(\omega)$ denotes   the Sobolev constant of $\omega$.
 \end{thm}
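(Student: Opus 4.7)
\medskip

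\textbf{Strategy.} The plan is to run a Moser iteration in the streamlined form of Kazdan and Aubin--Bourguignon, as presented in \cite{Blo12}. After subtracting $\sup_X \f$ one may assume $\sup_X \f = 0$; set $u := 1 - \f \geq 1$, so that $\omega_\f = \omega - dd^c u \geq 0$ and the equation reads $(\omega - dd^c u)^n = V_\omega f\, dV_X$. The goal is to bound $\sup_X u$, hence ${\rm Osc}_X(\f) = \sup_X u - 1$, in terms of $n$, $p$, $C_S(\omega)$ and $\|f\|_p$.

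\medskip

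\textbf{Integration by parts.} I expand
\[
\omega^n - \omega_\f^n = dd^c u \wedge T, \qquad T := \sum_{k=0}^{n-1} \omega^k \wedge \omega_\f^{n-1-k},
\]
a closed positive $(n-1,n-1)$-form bounded below by $\omega^{n-1}$. Multiplying by $u^{m-1}$ with $m \geq 2$ and integrating over the closed manifold $X$ yields, after integration by parts,
\[
(m-1) \int_X u^{m-2}\, du \wedge d^c u \wedge T + \int_X u^{m-1}\, \omega^n = V_\omega \int_X u^{m-1} f\, dV_X.
\]
With $v := u^{m/2}$ the gradient integrand becomes $\tfrac{4}{m^2}\, dv \wedge d^c v$, and using $T \geq \omega^{n-1}$ gives a Dirichlet-type control
\[
\int_X dv \wedge d^c v \wedge \omega^{n-1} \leq C\, m \int_X u^{m-1} f\, dV_X.
\]

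\medskip

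\textbf{Sobolev, H\"older, and iteration.} I combine the previous bound with the $\omega$-Sobolev inequality
\[
\|v\|_{L^{2\beta}(\omega^n)}^2 \leq C_S(\omega)\bigl(\|\nabla v\|_{L^2(\omega^n)}^2 + \|v\|_{L^2(\omega^n)}^2\bigr), \qquad \beta := \tfrac{n}{n-1},
\]
and the H\"older estimate $\int u^{m-1} f\, dV_X \leq \|f\|_p\, \|u\|_{L^{(m-1)p/(p-1)}}^{m-1}$, to derive a self-improving inequality
\[
\|u\|_{L^{m\beta}}^m \leq A\, m\, \|u\|_{L^{(m-1)p/(p-1)}}^{m-1} + A\, \|u\|_{L^m}^m,
\]
with $A$ depending only on $n$, $p$, $C_S(\omega)$ and $\|f\|_p$. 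The hypothesis $p>n$ is equivalent to $\beta > p/(p-1)$, so the recursion $m_{k+1} := 1 + m_k\, \beta(p-1)/p$ satisfies $m_k \to \infty$ geometrically; iterating and taking logarithms produces $\sup_X u \leq C_1$.

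\medskip

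\textbf{Launching the iteration; main obstacle.} To prime the iteration a uniform bound on $\|u\|_{L^{m_0}}$ for some finite $m_0$ is needed; this is supplied by Skoda--Zeriahi (Theorem~\ref{thm:skoda}): since $\sup_X \f = 0$ and $\omega$ is K\"ahler, $e^{-\alpha \f}$ is uniformly integrable, whence every finite $L^r$-norm of $u$ is controlled in terms of $n$ and $\int_X \omega \wedge \omega_X^{n-1}$. The delicate point in the argument is the bookkeeping of the prefactors $(A m_k)^{1/m_k}$ across the infinitely many iteration steps: verifying convergence of $\sum_k m_k^{-1}\log(A m_k)$ hinges on the geometric growth $m_k \sim (\beta(p-1)/p)^k$, and this is precisely the place where Yau's threshold $p > n$ is used.
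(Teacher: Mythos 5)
Your proposal is correct and follows essentially the same route as the paper: the Kazdan/Aubin--Bourguignon simplification of Yau's Moser iteration (as in Blocki's presentation), with the same expansion $\omega^n-\omega_\f^n=dd^c u\wedge T$, the same H\"older--Sobolev self-improving inequality, the same use of $p>n$ to get $\beta>q$, and the iteration primed by a fixed $L^{m_0}$-bound (you via Skoda--Zeriahi, the paper via $L^r$-compactness of $PSH_0(X,\omega)$ — an immaterial difference). The bookkeeping you flag (absorbing the two terms using $u\geq 1$ after normalizing the volume, and convergence of $\sum_k m_k^{-1}\log(Am_k)$ from the geometric growth of $m_k$) is exactly how the paper concludes.
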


 \begin{proof}
Translating and rescaling we can assume that $V_{\omega}=1$ and $\sup_X \varphi=-1$.
Replacing $f$ by $\tilde{f}$ if necessary, we further assume that $dV_X=\omega^n$.
Observe that $||\varphi||_r \leq||\varphi||_s$ if $r\leq s<\infty$. 
We have
  $$(f-1)\omega^n=(\omega+dd^c\varphi)^n-\omega^n=dd^c\varphi\wedge
    T,$$
where
  $T=\sum_{j=0}^{n-1}(\omega+dd^c\varphi)^j\wedge\omega^{n-1-j}\geq
     \omega^{n-1}.$
Stokes theorem yields, for $r>0$,
{\small
\begin{eqnarray*}
\int_X (-\varphi)^r (f-1)\omega^n
&=& \int_X (-\varphi)^r dd^c\varphi\wedge T=-\int_X d(-\varphi)^r\wedge d^c\varphi \wedge T  \\
&=& r\int_X (-\varphi)^{r-1}d\varphi\wedge d^c\varphi\wedge T
     \geq r \int_X (-\varphi)^{r-1}d\varphi\wedge d^c\varphi\wedge
        \omega^{n-1}\\
     &=& \frac{4r}{(r+1)^2}\int_X d(-\varphi)^{\frac{r+1}{2}}
       \wedge d^c(-\varphi)^{\frac{r+1}{2}}\wedge\omega^{n-1}
 \end{eqnarray*}
 }
 so that
$$
 \int_X (-\varphi)^r (f-1)\omega^n \geq \frac{c_n r}{(r+1)^2}||\nabla (-\varphi)^{\frac{r+1}{2}}||_2^2.
$$
Using H\"older inequality we obtain
$$
||\nabla (-\varphi)^{\frac{r+1}{2}}||_2 \leq c_n' \sqrt{1+||f||_p} \frac{r+1}{\sqrt{r}}
 \left( \int_X (-\f)^{rq} \omega^n \right)^{\frac{1}{2q}},
$$
where $q$ denotes the H\"older conjugate of $p$.
 Using Sobolev inequality 
$$
  ||v||_{\frac{2n}{n-1}}\leq C_S(\omega)\left(||v||_2+||\nabla v||_2\right),
$$
we infer
$$
||\f||_{\frac{n(r+1)}{n-1}} \leq 
C^{\frac{2}{r+1}} (1+||f||_p)^{\frac{2}{r+1}}
\left\{ ||\f||_{r+1}+ \left( \frac{r+1}{\sqrt{r}} \right)^{\frac{2}{r+1}}  
\left( ||\f||_{rq} \right)^{\frac{r}{r+1}} \right\}.
$$

\smallskip

Assume now that $p<+\infty$ hence $q>1$. We are going to use the previous inequality for 
$r \geq \frac{1}{q-1}$, so we can assume that $r+1 \leq rq$. Our normalization ensures that
$||\f||_{rq} \geq 1$, so the previous inequality yields the following
$$
||\f||_{\frac{n(r+1)}{n-1}} \leq  \left(C' [r+1] \right)^{\frac{1}{r+1}}  ||\f||_{rq}.
$$
This inequality is only meaningful when $\frac{n(r+1)}{n-1}>rq$. This requires
  $\frac{n}{n-1}>q$, which is equivalent to $p>n$.

\smallskip

We can now apply Moser's iteration scheme: set
  $$
  r_0:=\frac{1}{q-1}
 \; \;   \text{ and } \; \;
   r_k:=\frac{n}{n-1} \left( \frac{r_{k-1}}{q}+1 \right).
  $$
We obtain
  $$
  ||\varphi||_\infty=\lim_{k\to\infty}||\varphi||_{r_k}\leq
     ||\varphi||_{\frac{1}{q-1}} \prod_{k=0}^\infty \left(C' \left[ \frac{r_{k-1}}{q}+1 \right]\right)^{\frac{1}{\frac{r_{k-1}}{q}+1 }}.
     $$
This infinite product converges as $r_k$ grows geometrically towards $+\infty$.
The conclusion follows as $||\f||_{\frac{1}{q-1}}$ is  bounded by compactness of $PSH_0(X,\omega)$.
 \end{proof}

 \subsubsection*{Step 2}
 
 Fix $n<\tilde{p}<p$. Using H\"older inequality and Theorem \ref{thm:skoda} we can fix
  $0<\gamma=\gamma(n,p,A)=\alpha(n,p,A) \frac{p-\tilde{p}}{p\tilde{p}}$ 
  so that $\tilde{f}=e^{-\gamma \f} f \in L^{\tilde{p}}$ with
 $$
 ||\tilde{f}||_{\tilde{p}} \leq ||f||_p \left(\int_X  e^{-\alpha \f} dV_X \right)^{\frac{p-\tilde{p}}{p\tilde{p}}}
 \leq C_1 ||f||_p.
 $$
 
 We let $\rho$ denote the smooth   solution in $PSH_0(X,\omega)$ of the equation
 $$
 \frac{1}{V_{\omega}}(\omega+dd^c \rho)^n= \frac{\tilde{f} dV_X}{\int_X \tilde{f} dV_X}=\tilde{F} dV_X,
 $$
 where $\int_X \tilde{f} dV_X=\int_X e^{-\gamma \f} f dV_X \geq \int_X  f dV_X=1$ since $\f \leq 0$, hence
 $$
  ||\tilde{F}||_{\tilde{p}} \leq  ||\tilde{f}||_{\tilde{p}} \leq C_1 ||f||_p.
 $$
 It follows from Theorem \ref{thm:yau} that $\rho$ is uniformly bounded.
 We use this function to establish the following useful technical result:
 
 \begin{lem} \label{lem:trick}
 Fix $p>n$, $0<a<1$ and $b>0$. Assume that there is a smooth function  $v \in PSH_0(X,\omega)$ 
 such that 
 $$
  \frac{1}{V_{\omega}} (\omega+dd^c \f)^n \leq a  \frac{1}{V_{\omega}} (\omega+dd^c v)^n+b f dV_X.
 $$
 Then $||\f||_{\infty} \leq C$, where $C=C(n,p,a,b,||f||_p,||v||_{\infty})$.
 \end{lem}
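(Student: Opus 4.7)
The plan is to combine Yau's a priori estimate from Step~1 with a Brunn--Minkowski / comparison-type argument, exploiting the margin $1-a>0$ in the hypothesis.

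First I would introduce the auxiliary smooth function $\rho\in\PSH_0(X,\omega)$ solving
$$\frac{1}{V_{\omega}}(\omega+dd^c\rho)^n = f\,dV_X.$$
By Theorem~\ref{thm:yau} applied to $\rho$, one has $\|\rho\|_\infty\leq C_0=C_0(n,p,\|f\|_p,C_S(\omega))$, giving a uniformly bounded $\omega$-psh function that carries all the information about $f$. The hypothesis can then be rephrased, using subadditivity of $t\mapsto t^{1/n}$, as the pointwise density inequality
$$(\omega+dd^c\varphi)^{n}\;\leq\;\Bigl(a^{1/n}(\omega+dd^c v)^{n\cdot 1/n}+b^{1/n}(\omega+dd^c\rho)^{n\cdot 1/n}\Bigr)^{n},$$
i.e.\ $\MA(\varphi)^{1/n}\leq a^{1/n}\MA(v)^{1/n}+b^{1/n}\MA(\rho)^{1/n}$.

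Next I would look for a ``dominating'' comparison function $w\in\PSH(X,\omega)$, bounded in terms of $\|v\|_\infty$ and $C_0$, such that $\MA(\varphi)\leq \MA(w)$ as measures. The natural candidate is a convex combination $w=\alpha v+\beta\rho$ with $\alpha+\beta=1$, which is automatically $\omega$-psh, and for which the Brunn--Minkowski inequality gives $\MA(w)^{1/n}\geq \alpha\MA(v)^{1/n}+\beta\MA(\rho)^{1/n}$. Choosing $\alpha\geq a^{1/n}$, $\beta\geq b^{1/n}$ would then imply $\MA(\varphi)\leq\MA(w)$, and the domination principle for Monge--Amp\`ere measures of bounded $\omega$-psh functions would yield $\varphi\geq w-C'$ for a constant $C'=C'(n,p,a,b,\|f\|_p,\|v\|_\infty)$, whence the sought bound $\|\varphi\|_\infty\leq\|v\|_\infty+C_0+C'$.

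The main obstacle is that such $\alpha,\beta$ with $\alpha+\beta\leq 1$ need not exist: mass consistency forces $b\geq 1-a$, and for $n\geq 2$ one has $a^{1/n}+(1-a)^{1/n}>1$ whenever $a\in(0,1)$. To bypass this I would either: (i) scale $\rho$ by a factor $\mu>1$ and replace $\mu\rho$ by its $\omega$-psh envelope $P_\omega(\mu\rho)$, which by Theorem~\ref{thm:envelope} is $C^{1,1}$, bounded by $\mu\|\rho\|_\infty$, and whose Monge--Amp\`ere is concentrated on the contact set where it matches $\MA(\mu\rho)$; or (ii) replace the direct construction by a De~Giorgi iteration on the sublevel sets $E_s:=\{\varphi<v-s\}$, starting from the comparison principle estimate
$$(1-a)\int_{E_s}(\omega+dd^c v)^n\;\leq\; bV_\omega\|f\|_p\,|E_s|^{1-1/p},$$
and combining it with a Kolodziej-type lower bound on $\int_{E_s}\MA(v)$ in terms of $|E_s|$ using only $\|v\|_\infty$. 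In either case, the technical heart is the passage from the ``$1/n$-th root'' form of the hypothesis back to an honest comparison between $\varphi$ and a bounded function, and this is where the proof will require genuine work.
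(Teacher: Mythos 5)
You have correctly located the heart of the matter: a naive convex combination $w=\alpha v+\beta\rho$ cannot work because mass considerations force $b\geq 1-a$ while $a^{1/n}+(1-a)^{1/n}>1$ for $n\geq 2$. But neither of your two proposed bypasses closes this gap. For (i), scaling by $\mu>1$ and passing to $P_{\omega}(\mu\rho)$ does not boost the density: on the contact set one has $\omega+dd^c(\mu\rho)=\mu(\omega+dd^c\rho)-(\mu-1)\omega\leq\mu(\omega+dd^c\rho)$, so Theorem \ref{thm:envelope} only gives $(\omega+dd^cP_{\omega}(\mu\rho))^n\leq \mu^n(\omega+dd^c\rho)^n$ there — an upper bound, not the lower bound $\gtrsim \mu^n f\,dV_X$ you would need; moreover the contact set may sit precisely where $(\omega+dd^c(\mu\rho))^n$ is small, so no pointwise domination of $b f\,dV_X$ comes out. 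For (ii), the claimed ingredient, a lower bound on $\int_{E_s}\MA(v)$ in terms of $|E_s|$ and $\|v\|_\infty$ alone, is false: $\MA(v)$ can vanish identically on large open sets (maximal $\omega$-psh functions), so $|E_s|$ controls nothing about $\int_{E_s}\MA(v)$. What a sublevel-set iteration actually requires is the capacity comparison $t^n\,\capa(\{\varphi<v-s-t\})\leq\int_{\{\varphi<v-s\}}\MA(\varphi)$ together with a volume--capacity domination, i.e.\ essentially the generalized-capacity proof of \cite[Theorem 3.3]{DDL21} that the paper cites as the previously known route — and you have not set up that machinery, so as written the argument does not go through.

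The idea the paper uses to beat the $a^{1/n}+b^{1/n}>1$ deficit is different and is the step missing from your proposal: twist the density by the unknown solution itself. One takes $\rho\in PSH_0(X,\omega)$ solving $\MA(\rho)=\tilde F\,dV_X$ with $\tilde F\propto e^{-\gamma\varphi}f$; by Theorem \ref{thm:skoda} (Skoda) and H\"older, $e^{-\gamma\varphi}f\in L^{\tilde p}$ for some $n<\tilde p<p$ with controlled norm, so Theorem \ref{thm:yau} bounds $\rho$ uniformly. Setting $u=\delta v+(1-\delta)\rho-C$ and expanding $\MA(u)\geq\delta^n\MA(v)+(1-\delta)^n\MA(\rho)$, the point is that
$$
e^{-\gamma\varphi}f=e^{-\gamma(\varphi-u)}e^{-\gamma u}f\geq e^{\gamma C}\,e^{-\gamma(\varphi-u)}f ,
$$
so the additive shift $-C$ converts into an arbitrarily large multiplicative gain $e^{\gamma C}$ on the density, exactly compensating the losses $\delta^n,(1-\delta)^n$, while on the bad set $(\varphi<u)$ the extra factor $e^{-\gamma(\varphi-u)}\geq 1$ is harmless. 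One then gets $\MA(\varphi)\leq\lambda\MA(u)$ on $(\varphi<u)$ with $\lambda=a/\delta^n<1$, and a pointwise maximum-principle argument at the minimum of $\varphi-u$ shows this set is empty, giving $\varphi\geq u$. Your comparison-function strategy is the right shape, but without this self-improving twist (or the full capacity iteration) the crucial quantitative step is not there.
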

 
 The result was proved in \cite[Theorem 3.3]{DDL21} using generalized Monge-Amp\`ere capacities. We provide below a different and simpler proof. 
 
 \begin{proof}
 Set $MA(\cdot):= \frac{1}{V_{\omega}} (\omega+dd^c \cdot)^n$.
 Fix $0<\delta<1$ such that $\la=a/\delta^n \in (0,1)$ and
 consider 
 $$
  u=\delta v+(1-\delta)\rho - C,
 $$
 where $\rho$ is the function constructed above and the value of the constant $C=C(a,b,\delta,n)$ is specified hereafter.
It suffices to show that $u \leq \f \leq 0$. 

\smallskip
 
 Observe that $u$ is a smooth function in $PSH(X,\omega)$ such that
 $$
 MA(u)\geq
  \delta^n  MA(v) + (1-\delta)^n MA(\rho).
 $$
  Now   $\int_X \tilde{f} dV_X \leq ||\tilde{f}||_{\tilde{p}} \leq C_1 ||f||_p$ 
  and $v,\rho \leq 0$ hence
 $$
 MA(\rho) \geq \frac{1}{C_1 ||f||_p} e^{-\gamma \f} f dV_X \geq e^{\gamma C-\log [C_1 ||f||_p]} e^{-\gamma(\f-u)} f dV_X
 $$
 and we eventually obtain
 \begin{flalign*}
 	\lambda MA(u) & \geq   \lambda \delta ^n  MA (v) +  \lambda (1-\delta)^n e^{\gamma C-\log [C_1 ||f||_p]} e^{-\gamma(\f-u)} f dV_X \\
 	&= a MA(v)+b e^{-\gamma(\f-u)} f dV_X,
 \end{flalign*}
 by choosing 
 $$
 C=\frac{\log \left[ C_1 ||f||_p \, a^{-1}(1-\delta)^{-n} b\delta^{n} \right]}{\gamma}.
 $$
 On the set $(\f<u)$ we thus obtain
 $$
 MA(\f) \leq a MA(v) + b f dV_X \leq \la MA(u).
 $$
 
Since $\la<1$ we infer that the set $(\f<u)$ is empty, i.e. $u \leq \f \leq 0$ is uniformly bounded below.
 Indeed, if this set is not empty then at a point $x_0$ realizing the minimum of $\f-u$ we have $dd^c \f \geq dd^c u$ hence
 $MA(\f)(x_0) \geq MA(u)(x_0)$. Since $u(x_0)<\varphi(x_0)$ and $\MA(\varphi)\leq \lambda \MA(u)$ in $(\varphi<u)$, we get a contradiction. 
  \end{proof}

 \subsubsection*{Step 3} 
 
 We finally use Lemma \ref{lem:trick} to reduce Ko{\l}odziej's criterion to Theorem \ref{thm:yau}.
 Let $w: \R^+ \rightarrow [1,+\infty)$  be a convex increasing weight satisfying {\CK}, i.e.
 $w(t) \sim t (\log t)^n (h \circ \log \circ \log t)^n$ with 
$h$ increasing and $\int^{+\infty} h(t)^{-1}dt<+\infty$,  such that $w\circ f\in L^1(X,dV_X)$.
 It is possible to find $t_0>>1$ such that 
\[
w(t)\geq \frac{2}{3} t (\log t)^n (h\circ \log \circ \log t)^n, \; \forall t\geq t_0. 
\]
Without loss of generality, we can assume that $t_0=10$, and $w$ is smooth.

 Let  $v$ be a smooth function in $PSH_0(X,\omega)$ such that
 $$
 MA(v)=\frac{1}{V_{\omega}} (\omega+dd^c v)^n=c \,  w \circ f  dV_X,
 \; \text{ where } \;
 c^{-1}=\int_X w \circ f dV_X.
 $$
 We let $\chi: \R^- \rightarrow \R^-$ be a smooth convex increasing function such that 
 $0 \leq \chi' \leq 1$. Thus $\chi \circ v$ is a smooth function such that
 $$
 \omega+dd^c \chi \circ v=[1-\chi' \circ v] \, \omega+\chi' \circ v \, [\omega+dd^c v] +\chi'' \circ v \, dv \wedge d^c v > 0,
 $$
 i.e. $\chi \circ v \in PSH(X,\omega)$ and $MA(\chi \circ v) \geq (\chi' \circ v)^n MA(v)$.
 
 \smallskip

 Using Theorem \ref{thm:skoda} we can fix $\al>0$ small enough so that 
 $e^{-\al v} \in L^p$ for $p>n$.
 We also fix $B>\log 10$ (to be specified below).
 In $\{ \log f <-\al v +B \}$ we obtain
 $$
 MA(\f) =f dV_X \leq e^B e^{-\al v} dV_X.
 $$
 
 On the other hand in $\{ \log f  \geq -\al v +B \}$ we obtain
 $$
 MA(\chi \circ v) \geq c (\chi' \circ v)^n w \circ f dV_X 
 \geq c \left[ \chi' \left( \frac{-\log f +B}{\al} \right) \right]^n w \circ f dV_X.
 $$
We choose $\chi$ so that for all  $s > \log 10$,
 $$
 s \, \chi' \left( \frac{-s +B}{\al} \right) h \circ \log s \geq \left(\frac{2}{c}\right)^{1/n}.
 $$
 It follows that 
 $$
 MA(\chi \circ v) \geq \frac{4}{3} f dV_X=\frac{4}{3} MA(\f)
\; \text{  in } \;
\{ \log f  \geq -\al v +B \} \subset   \{\log f >\log 10 \},
$$
 hence
 $$
MA(\f) \leq \frac{3}{4} MA(\chi \circ v)+e^B e^{-v} dV_X
\; \; \text{ on } \; \; X.
 $$
 The conclusion follows from Lemma \ref{lem:trick} if we show that $\chi \circ v$ is uniformly bounded.
 
 \smallskip
 
 Set 
 $$
 b(x)=-\chi\left( \frac{-e^{x}+B}{\al} \right).
 $$ 
 The  condition on $\chi'$ becomes
 $b'(x)h(x) \geq c'=\alpha \left(\frac{2}{c}\right)^{1/n}$ for all  $x > \log \log 10$.
 We thus choose $b'(x)=c' h^{-1}(x)$. The integrability condition on $h$ ensures that
 $b$, hence $\chi$, is uniformly bounded. 
 
  Observe finally that 
   $\chi'(0)=\frac{\al}{Bh(B)}$. Since $h$ is increasing, we can ensure that $0 \leq \chi'(0) \leq 1$
 by choosing $B$ large enough.  $\Box$

 \begin{remark}
 The proof shows that $||\chi||_{\infty} \leq C(1/c)^{1/n} = C \left( \int_X w \circ f dV_X \right)^{1/n}$ 
 for some uniform  $C$, whose dependence on geometric constants is easy to follow.
 \end{remark}

 \subsection{Radial examples}

   We assume here that the measures and quasi-psh functions under consideration are smooth 
   in $X \setminus \{ p \}$. We choose a local chart biholomorphic to the unit ball $B$ of $\C^n$,
   with $p$ corresponding to the origin. We further assume that the data
   are invariant under the group    $U(n,\mathbb C)$ near $p$.
   The  singularity type then only depends on the local behavior near $p$.
   
 We thus consider the local situation of 
  a psh function $v=\chi \circ L$, 
%  \marginpar{Notation $\chi$ d\'ej\`a utilis\'ee pour (K)}
  where $\chi:\R^- \rightarrow\R^-$ is a 
 smooth  strictly convex increasing function with $\chi'(-\infty)=0$ and $L:z \in  B \rightarrow \log |z|^2 \in \R^-$.
  A standard computation shows that
\[  dd^c v=(\chi' \circ L) \, dd^c L +(\chi'' \circ L) \, dL \wedge d^c L\]
  and
$$
  (dd^c v)^n =c_n \frac{(\chi' \circ L)^{n-1}\chi'' \circ L}{|z|^{2n}} dV
=c_n \big[(\chi'^{n-1}\chi'' e^{-n\cdot})\circ L\big] \, dV,
 $$
 where $dV$ denotes here the euclidean volume form.
 These local computations can easily be globalized by using a max construction.
  We let the details to the reader and focus on the asympototic behavior at $0$
  of our local computations.
  
 \smallskip

  We  want to test the optimality of {\CK}. We can 
  assume without loss of generality that $\chi$ does not decrease too fast at infinity,
 imposing that $\chi'(t),\chi''(t) \ge e^{Ct}$ for some $C>0$. 
This guarantees that $\log f \sim -\log |z|$, so that  
the  measure  $(dd^c v)^n=f \, dV$ satisfies {\CK} if and only if
   $$
  \int_{-\infty} \chi''(t) (\chi'(t))^{n-1}  |t|^n (h \circ \log |t|)^{n} dt <+\infty.
  $$
   for some increasing function $h$ such that $\int^{+\infty} h(t)^{-1} dt <+\infty$.

   \begin{exa} \label{exa:optimal}
For $\chi(t)=-\log \circ \log \circ \log (-t+10000)$ we obtain near $-\infty$
$$
\chi'(t) \sim \frac{1}{(-t) \log (-t) \log \circ \log (-t)}
\; \; \text{ and } \; \;
\chi''(t) \sim \frac{1}{(-t)^2 \log (-t) \log \circ \log (-t)}
$$
hence 
  $$
  \int_{-\infty} \chi''(t) (\chi'(t))^{n-1}  |t|^n (h \circ \log |t|)^{n} dt 
  \sim \int^{+\infty} \frac{h(s)^n ds}{s^n (\log s)^n} <+\infty,
  $$
  if we choose $h(s)=s^{1-1/n}$ and $n \geq 2$. This provides an unbounded example s.t.
  $$
  \int_X f (\log [f+10])^n (\log \circ \log [f+100])^{n-1} dV <+\infty.
  $$
   \end{exa}

On the other hand for $h(t)=t^{1+\e/n}$ with $\e>0$, {\CK} becomes
$$
  \int_X f (\log [f+10])^n (\log \circ \log [f+100])^{n+\e} dV <+\infty
  \Longrightarrow 
  v \text{ is bounded.}
  $$
  Thus there is a small gap in the previous example, which motivates the following:

 \begin{quest}
 Is {\CK} optimal ? More precisely can one find $0<\e<1$ and some mildly unbounded psh function $v$
 such that  $MA(v)=f dV$ with 
 $$
 \int_X f (\log [f+10])^n ( \log \circ \log [f+100])^{n-\e} dV<+\infty ?
 $$
 \end{quest}

 We observe that one can not improve Example \ref{exa:optimal} in the radial case:
assume that $p>n-1$,  $v=\chi \circ L$ and $(dd^c v)^n=f dV$ with 
 $$
  \int_{-\infty}^0 \chi''(t) (\chi'(t))^{n-1}  |t|^n (\log |t|)^{p} dt <+\infty.
  $$
  An integration by parts shows that
  $$
  \int_{-\infty}^0   (\chi'(t))^{n} |t|^{n-1} (\log |t|)^{p} dt \lesssim
  \int_{-\infty}^0 \chi''(t) (\chi'(t))^{n-1}  |t|^n (\log |t|)^{p} dt <+\infty,
  $$
while it follows from H\"older inequality that 
{\small
$$
\int_{-\infty}^0 \chi'(t) dt \leq 
\left(  \int_{-\infty}^0   (\chi'(t))^{n} |t|^{n-1} (\log |t|)^{p} dt \right)^{1/n}
\left(  \int_{-\infty}^0 |t| (\log |t|)^{\frac{p}{n-1}} dt \right)^{1-1/n} <+\infty.
$$
}
Thus $\chi(-\infty)>-\infty$, hence $v$ is bounded whenever $p>n-1$.

\section{Other nonlinear equations} \label{sec:nl}

The technique introduced by Guo-Phong-Tong in 
%their breakthrough result
 \cite{GPT23}
applies to a large family of equations. We show here that most of them 
can be treated by a simple reduction to the Monge-Amp\`ere equation, following 
an idea from \cite{GL25b}.

\subsection{Quasi-subharmonic functions}

\subsubsection{$\omega$-subharmonic functions}

 Fix $U\subset X$ an open set.
  A function $h\in {\mathcal C}^2(U,\mathbb R)$ is called harmonic with respect to $\omega$ if 
 \[
 dd^c h \wedge \omega^{n-1} = 0 \; \text{pointwise in }\; U. 
 \]

A function $u : U \rightarrow \mathbb R \cup \{-\infty\}$ is  subharmonic with respect to $\omega$
if it is upper semicontinuous and for every harmonic function $h$ in $D\subset U$, one has
 \[
 u \leq h \; \text{on} \; \partial D \Longrightarrow u \leq h \; \text{in}\; D. 
 \]
 
When $u\in {\mathcal C}^2(U)$, it  is subharmonic with respect to $\omega$ iff $dd^c u \wedge \omega^{n-1}\geq 0$ pointwise.
If $u$ is subharmonic with respect to $\omega$ and in $L^1$ then $dd^c u \wedge \omega^{n-1}\geq 0$ in the sense of distributions. 
Conversely the latter condition implies that $u$ coincides 
  a.e. with a unique function $\hat{u}$ subharmonic with respect to $\omega$.

   \begin{defi}
 	A function $\f:U \rightarrow \R \cup \{-\infty\}$ is quasi-subharmonic with respect to $\omega$ if locally
 	 $\f= u+\rho$ where $u$ is subharmonic with respect to $\omega$ and $\rho$ is smooth. 
 	We say that  $\f$ is $\omega$-subharmonic if it is quasi-subharmonic with respect to $\omega$
 	and  $(\omega+dd^c \f) \wedge \omega^{n-1}\geq 0$ in the sense of distributions on $X$. 
 	
 	We let $SH(X,\omega)$ denote the set of integrable $\omega$-subharmonic functions in $X$.
   \end{defi}
   
We will need the following classical facts:
\begin{itemize}
\item  if $u\in {\mathcal C}^2$ then $u\in SH(X,\omega)$ if and only if $(\omega+dd^c u)\wedge \omega^{n-1} \geq 0$ pointwise;
\item the subset $SH_0(X,\omega)=\{ u \in SH(X,\omega), \; \sup_X u=0 \}$ is compact in $L^1$.
\end{itemize}   
   We refer the reader to \cite{GZbook} for further properties of $\omega$-subharmonic functions:

\subsubsection{The set $\Gamma(X,\omega)$}

Let $\Gamma \subset \R^n$ denote an open symmetric convex cone with vertex at the origin such that
$$
\R_+^n \subset \Gamma \subset \{ \la \in \R^n, \la_1+\cdots+\la_n >0 \}.
$$
In this section we fix $\delta>0$ and a  positive function $g: \Gamma \rightarrow \R_+^*$ such that
\begin{enumerate}
\item $g$ is symmetric (invariance under permutations);
\smallskip
\item $\frac{\partial g}{\partial \la_j}(\la)>0$ for all $\la \in \Gamma$ (ellipticity condition);
\item $g(\la) \geq \delta \left( \pi_{j=1}^n \la_j \right)^{1/n}$ for all $\la \in \R_+^n$ (determinantal majorization).
\end{enumerate}

\begin{defi}
A smooth function $\f: X \rightarrow \R$ belong to ${\Gamma}(X,\omega)$ if 
the eigenvalues $\la_1,\ldots,\la_n$
of $\omega+dd^c \f$ with respect to $\omega$ belong to $\Gamma$ at all points $x \in X$.
\end{defi}

Observe that $PSH(X,\omega) \subset \Gamma(X,\omega) \subset SH(X,\omega)$,
with $\Gamma(X,\omega) = SH(X,\omega)$ when $\Gamma=\{ \la \in \R^n, \la_1+\cdots+\la_n >0 \}$,
while $PSH(X,\omega) \subset \Gamma(X,\omega)$ when $\Gamma=\R_+^n$.

\subsubsection{Determinantal majorization}

We consider the nonlinear PDE
\begin{equation}
\tag{NL}
\label{eq:nl}
g(\la(\f))=c f^{1/n},
\end{equation}
where
\begin{itemize}
\item $g$ satisfies the conditions $(1)$-$(2)$-$(3)$ above;
\item $f$ is a fixed smooth positive density;
\item $c>0$ is a normalizing constant determined by the equation;
\item $\f$ is the unknown, a smooth function in ${\Gamma}(X,\omega)$.
\end{itemize}

The assumptions made on $g$ are slightly less general than the ones considered in \cite{Szek18,GPT23,GP24},
but they nevertheless cover a huge family of complex geometric PDE's as observed by \cite{HL23}
(see also \cite{ADO22,CX24}).

\subsection{Proof of Theorem B}

\begin{thm}
Fix $w$ a weight that satisfies {\CK}.
If $\f$ is a smooth solution to \eqref{eq:nl} then 
$$
{\rm Osc}_X(\f) \leq M_0,
$$
where $M_0$ depends on an upper bound on the Luxembourg norm $||f||_{{w}}$.
\end{thm}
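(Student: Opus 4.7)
The plan is to reduce Theorem~B to Theorem~A by passing from $\varphi$ to its $\omega$-plurisubharmonic envelope, exploiting the determinantal majorization~(3) to derive a Monge--Amp\`ere subsolution inequality.

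After normalizing $\sup_X\varphi=0$, I would consider $u:=P_\omega(\varphi)$. By Theorem~\ref{thm:envelope}, $u\in \PSH(X,\omega)$ is $C^{1,1}$-smooth, $u\leq\varphi\leq 0$, one has $(\omega+dd^c u)^n=\mathbf 1_{\{u=\varphi\}}(\omega+dd^c\varphi)^n$, and $\omega+dd^c\varphi\geq 0$ pointwise on the contact set $\{u=\varphi\}$. Since the constant function $\inf_X\varphi$ is an $\omega$-psh competitor below $\varphi$, one has $\inf_X u=\inf_X\varphi$, and therefore $\mathrm{Osc}_X(\varphi)=-\inf_X u$.

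On the contact set, all eigenvalues $\lambda_j$ of $\omega+dd^c\varphi$ lie in $\R_+$, so the determinantal majorization (3) applies:
\[
(cf^{1/n})^n=g(\lambda)^n\geq\delta^n\prod_j\lambda_j=\delta^n\,\frac{(\omega+dd^c\varphi)^n}{\omega^n}.
\]
This yields the global subsolution bound
\[
(\omega+dd^c u)^n\leq (c/\delta)^n f\,\omega^n \quad \text{on } X.
\]
The normalizing constant $c$ can be controlled a priori in terms of $\|f\|_w$: testing the PDE at $\min_X\varphi$ (where $\omega+dd^c\varphi\geq\omega$ forces $\lambda\geq\mathbf 1$, hence $g(\lambda)\geq g(\mathbf 1)$ by the monotonicity in~(2)) combined with Theorem~\ref{thm:skoda} and the $L^1$-compactness of $SH_0(X,\omega)$ yields $c\leq C_1=C_1(\|f\|_w)$.

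With this subsolution inequality in hand, I would apply Lemma~\ref{lem:trick} to the shifted envelope $\tilde u:=u-\sup_X u\in \PSH_0(X,\omega)$, taking $a=0$ and $b$ absorbing $(c/\delta)^n$; the $C^{1,1}$-regularity of $\tilde u$ suffices to run the pointwise Hessian comparison at the minimum point of the lemma's proof, via Alexandrov's theorem (or via smoothing of $\tilde u$). This produces $\|\tilde u\|_\infty\leq M_1$ with $M_1=M_1(\|f\|_w)$. Combined with
\[
\mathrm{Osc}_X(\varphi)=-\inf_X u=\|\tilde u\|_\infty+|\sup_X u|,
\]
the argument is complete once $|\sup_X u|$ is controlled.

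The main obstacle is precisely this last quantity. In the pure Monge--Amp\`ere case, $\varphi$ is already $\omega$-psh so $u=\varphi$ and $|\sup_X u|=0$ automatically; but for general NL the envelope may lie strictly below $\varphi$ near the maximum of $\varphi$, and a new ingredient is needed. Following the generalization of the idea from \cite{GL25b}, one bounds $|\sup_X u|$ using the uniform $L^1$-control of $\varphi\in SH_0(X,\omega)$ together with Skoda's inequality (Theorem~\ref{thm:skoda}) applied to a fixed $\omega$-psh model function — this is exactly what promotes the Monge--Amp\`ere subsolution estimate for $u$ into an oscillation estimate for the original NL solution $\varphi$.
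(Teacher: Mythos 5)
Your overall route---normalize $\sup_X\f=0$, pass to the envelope $u=P_\omega(\f)$, use Theorem \ref{thm:envelope} and the determinantal majorization on the contact set to get $(\omega+dd^c u)^n\le (c/\delta)^n f\,\omega^n$, then feed this into the Monge--Amp\`ere estimate---is the paper's route, and your remark that $\inf_X u=\inf_X\f$ is correct. But there is a genuine gap at the very point you flag as ``the main obstacle'': the control of $\sup_X u$. Your sketch (``Skoda's inequality applied to a fixed $\omega$-psh model function together with the $L^1$-control of $\f$'') is not an argument: Theorem \ref{thm:skoda} gives exponential integrability of sup-normalized $\omega$-psh functions and provides no relation between $\sup_X P_\omega(\f)$ and $\int_X(-\f)\,\omega^n$; a priori the envelope could drop far below $\f$ away from the contact set, and nothing in your outline excludes this. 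The missing ingredient in the paper is an Orlicz-duality computation: writing $h=(w^*)^{-1}$ for the inverse of the Legendre transform of $w$, and using that $\frac{1}{V_\omega}(\omega+dd^c u)^n$ is a probability measure concentrated on $\{u=\f\}$ and dominated there by $(c/\delta)^n f\,\omega^n$ (where moreover $u=\f$), one gets
\begin{equation*}
h(-\sup_X u)\ \le\ \int_X h(-u)\,\frac{(\omega+dd^c u)^n}{V_\omega}\ \le\ \frac{c^n}{\delta^n}\int_X h(-\f)\,f\,\omega^n\ \le\ \frac{c^n}{\delta^n}\,\|f\|_w\int_X(-\f)\,\omega^n,
\end{equation*}
the last step being H\"older--Young, and $\int_X(-\f)\,\omega^n$ is bounded by $L^1$-compactness of sup-normalized functions in $\SH(X,\omega)\supset\Gamma(X,\omega)$. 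Since $h$ is increasing and bijective this bounds $-\sup_X u$, which combined with the oscillation bound for $u$ concludes. Without some such quantitative step your proof stops exactly where the new idea is required.

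Two secondary points. First, your claimed bound $c\le C_1(\|f\|_w)$ does not follow from your reasoning: at a minimum point of $\f$ one has $\omega+dd^c\f\ge\omega$, hence $g(\lambda(\f))\ge g(\mathbf 1)$, which gives a \emph{lower} bound on $c$ (namely $c\,f^{1/n}\ge g(\mathbf 1)$ at that point), not an upper bound; an upper bound would come from a maximum point and would require a pointwise lower bound on $f$, which $\|f\|_w$ does not provide. The paper does not bound $c$ either (its $M_0$ depends on $(c/\delta)^n\|f\|_w$, with $c,\delta$ treated as structural constants), so either do the same explicitly or supply a genuine argument. Second, applying Theorem A (or Lemma \ref{lem:trick}) to the ${\mathcal C}^{1,1}$ function $u$, and to the inequality rather than an equality, needs a word of justification: the inequality is harmless since $(\omega+dd^c u)^n=F\,\omega^n$ with $F\le (c/\delta)^n f$, whence $\|F\|_w\le (c/\delta)^n\|f\|_w$ by monotonicity of the Luxembourg norm, but the regularity issue requires approximation (or the paper's alternative all-smooth argument via an auxiliary Monge--Amp\`ere equation and a domination principle), which you should at least acknowledge.
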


The statement is a generalization of the main result of \cite{GPT23} as far as the condition on $f$ is concerned 
(see also \cite{Q24}). The estimates turn out to be also independent of the complex structure 
(see Remarks \ref{rem:fam} and \ref{rem:herm}).
As with Theorem A, the proof of this result is somehow more interesting than the statement itself.

\begin{proof}
We can normalize $\f$ so that $\sup_X \f=0$, and the problem boils down to show that 
$\f$ is uniformly bounded below. Consider 
$$
\p=P_{\omega}(\f)=\sup \{ u \in PSH(X,\omega), \; u \leq \f \},
$$
the   $\omega$-psh envelope of $\f$. It suffices to show that $\p$ is uniformly bounded below.

\smallskip

Let ${\mathcal C}=\{\p=\f\}$ denote the contact set. 
The function $\p$ is ${\mathcal C}^{1,1}$-smooth and
its Monge-Amp\`ere measure $MA(\p)$ is concentrated on ${\mathcal C}$.
It follows therefore from Theorem \ref{thm:envelope} and
the determinantal majorization that
$$
\delta^n MA(\p) = \delta^n 1_{{\mathcal C}} MA(\f)
\leq  1_{{\mathcal C}} g(\la(\f))^n \omega^n
\leq  c^n f \omega^n,
$$
since $\omega+dd^c \f \geq 0$ on the contact set.

 We can now invoke Theorem A to conclude that
 $
 {\rm Osc}_X (\p) \leq M_0,
 $
 where $M_0$ depends on an upper-bound on the Luxembourg norm 
 $$
 || \frac{c^n}{\delta^n} f ||_w= \frac{c^n}{\delta^n} ||  f ||_w.
 $$
 
 It remains to obtain a uniform control on $\sup_X \p$.
 Let $w^*$ denote the Legendre transform of the convex weight $w$, and let
 $h=(w^*)^{-1}$ denote the inverse  of  the bijective function $w^*$.
It follows from  H\"older-Young inequality that
$$
0 \leq h(-\sup_X \p) \leq \int_X h(-\p) MA(\p)
\leq \frac{c^n}{\delta^n }\int_X h(-\f) f \omega^n 
\leq \frac{c^n}{\delta^n } ||f||_{w} \int_X (-\f) \omega^n.
$$
The conclusion follows from compactness in $L^1$ of functions in $\Gamma(X,\omega)$ that
are normalized by $\sup_X \f=0$.
\end{proof}

\begin{remark} \label{rem:fam}
There is a vast literature on uniform estimates for complex Monge-Amp\`ere equations.
 \cite{DnGG23} provides estimates that 
remain uniform as the cohomology class and the complex structure vary. The proof above thus  
provides uniform estimates for holomorphic families of solutions to \eqref{eq:nl} as well.
 \end{remark}
 
 \begin{remark} \label{rem:herm}
The proof of Theorem B is exactly the same in the hermitian and in the K\"ahler setting. The estimates are therefore
also independent of the complex structure, as follows from \cite{Pan23}, 
and provide a generalization of those from \cite{GP24} for those equations that  satisfy a determinantal majorization.
\end{remark}

\subsection{Alternative proof}

In the previous proof of Theorem B we actually need to apply Theorem A to ${\mathcal C}^{1,1}$-smooth functions, rather than 
${\mathcal C}^{\infty}$-smooth ones.
One can reduce to the smooth case by approximation, but we now propose an alternative proof that   
allows one to work with ${\mathcal C}^{\infty}$-smooth objects.

\smallskip

We again normalize $\f$ so that $\sup_X \f=0$,  
let $w^*$ denote the Legendre transform of the convex weight $w$, and let
 $h=(w^*)^{-1}$ denote the inverse  of  the bijective function $w^*$.
H\"older-Young inequality ensures that
$$
\int_X h(-\f) f \omega^n 
\leq  ||f||_{w} \int_X (-\f) \omega^n \leq B
$$
is uniformly bounded.
% as follows from the compactness in $L^1$ of functions in $\Gamma(X,\omega)$ that are normalized by $\sup_X \f=0$. 
%Replacing $\delta$ by a smaller constant, we can assume that $\delta< c$. 
%We can assume without loss of generality that $\delta>0$ is small and denote
%by $\lambda>0$ be the unique constant such that $\frac{2}{1+\lambda}=\frac{\delta^n}{3^n c^n}$.
We fix positive constants $\lambda,M$ so large that
$$
\frac{2}{1+\lambda}<\frac{\delta^n}{3^n c^n}
\; \; \text{ and } \; \;
\frac{2B}{h(M)} \leq \min \left( 1, \frac{\delta^n}{3^n c^n} \right)
\; \; \text{ with } \; \;
\frac{2^n}{3^n} \leq \frac{V_{\omega} }{1+\lambda e^{-M^2}}.
$$

Let $\psi\in \PSH(X,\omega)$ be the smooth solution to the Monge-Amp\`ere equation 
$$
(\omega+dd^c \psi)^n = b_M \beta_M(\varphi) f \omega^n,
$$ 
where $\sup_X \psi=0$,
$b_M$ is a positive normalization constant
and 
$$
\beta_M(\varphi)= \frac{1}{1+ \lambda e^{M(\varphi+M)}}
$$
is a smooth function taking values in $(0,1]$.  

\smallskip

The constant $b_M$  can be estimated as follows.
Observe first that 
\begin{flalign*}
\int_X \beta_M(\varphi) f \omega^n &= 	\int_{\{\varphi\leq -M\}}   f\omega^n + \int_{\{\varphi>-M\}} \frac{1}{1+\lambda} f\omega^n \\
& \leq \frac{1}{h(M)} \int_X h(-\varphi) f\omega^n + \frac{1}{1+\lambda}  \\
&\leq \frac{B}{h(M)}  + \frac{1}{2}\frac{\delta^n}{3^n c^n} \leq \frac{\delta^n}{3^n c^n},
\end{flalign*}
hence $b_M \geq V_{\omega} 3^n c^n \delta^{-n}$.
On the other hand
\begin{flalign*}
\int_X \beta_M(\varphi) f\omega^n & \geq 	 \int_{\{\varphi>-M\}} \frac{1}{1+\lambda e^{M^2}} f\omega^n \\
& = \frac{1}{1+\lambda e^{M^2}}  \left(1- \int_{\{\varphi\leq -M\}} f\omega^n\right) \\
& \geq  \frac{1}{1+\lambda e^{M^2}}  \left(1- \frac{1}{h(M)}\int_X h(-\varphi) f\omega^n \right)\\
&\geq   \frac{1}{2(1+\lambda e^{M^2})}
\end{flalign*}
yields a uniform upper bound for $b_M$. 
It follows therefore from Theorem A  that $\psi$ is uniformly bounded.  

On the set $(\varphi<\psi-2M)$ we have $\varphi\leq -2M$, hence 
$\beta_M(\f) \geq \frac{1}{1+\lambda e^{-M^2}}$ and
\begin{eqnarray*}
g(\lambda(\p))^n \omega^n \geq  \delta^n (\omega+dd^c \p)^n 
&=& \delta^n b_M \beta_M(\f) f \omega^n \\ 
& \geq & V_{\omega} 3^n c^n \frac{1}{1+\lambda e^{-M^2}} f \omega^n \\
& \geq & 2^n c^n f \omega^n=2^n  g(\lambda(\f))^n \omega^n.
\end{eqnarray*}
Therefore $2 g(\lambda(\f)) \leq g(\lambda(\p))$ on the set $(\varphi<\psi-2M)$
and it follows from the domination principle below that this set is empty, i.e.
 $\varphi \geq \psi-2M$ is uniformly bounded.  $\Box$

\begin{remark}
This  proof can be applied to the Dirichlet problem for such equations
in strongly pseudoconvex domains, in which case the regularity of plurisubharmonic envelopes is
more delicate. We thank A.Zeriahi for pointing this out.
\end{remark}

We have used the following domination principle which is a direct consequence of the classical minimum principle. 

\begin{prop}[Domination principle]
	Assume $\varphi,\psi \in \mathcal{C}^2(X,\mathbb{R})$ satisfy 
	$$
	g(\lambda(\varphi)) \leq c g(\lambda(\psi)) \; \text{on}\; (\varphi<\psi),
	$$
	for some constant $c\in [0,1)$. Then $\varphi\geq \psi$. 
\end{prop}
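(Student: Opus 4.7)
The plan is a direct application of the second-derivative test at a minimum, using only the ellipticity and symmetry of $g$. I would argue by contradiction. Assume the open set $E := \{\varphi < \psi\}$ is nonempty. By compactness of $X$ the continuous function $\varphi-\psi$ attains its global minimum at some $x_0 \in X$; since $E\neq\emptyset$ this minimum value is negative, so $x_0 \in E$.

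Working in a local chart at $x_0$, the second-order necessary condition at the interior minimum of the $\mathcal{C}^2$ function $\varphi-\psi$ gives $dd^c(\varphi-\psi)(x_0) \geq 0$, hence
\[
A := (\omega+dd^c\varphi)(x_0) \;\geq\; (\omega+dd^c\psi)(x_0) =: B
\]
as Hermitian forms on $T_{x_0}X$.

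The crux is now the following monotonicity property: if $A, B$ are Hermitian endomorphisms whose eigenvalues lie in $\Gamma$ and $A \geq B$ in the Loewner order, then $g(\lambda(A)) \geq g(\lambda(B))$. This is classical in the theory of fully nonlinear elliptic Hessian equations and follows from the hypotheses on $g$: the set of Hermitian matrices with eigenvalues in $\Gamma$ is a convex cone (as $\Gamma$ is a symmetric convex cone containing $\mathbb{R}_+^n$), so the segment $A_t := B + t(A-B)$, $t\in[0,1]$, stays admissible (since $A-B \geq 0$ has eigenvalues in $\mathbb{R}_+^n\subset\Gamma$); symmetry of $g$ and the ellipticity condition $\partial g/\partial\lambda_j>0$ then make $\frac{d}{dt}g(\lambda(A_t))$ a non-negative linear form in the direction $A-B\geq 0$, so $g(\lambda(A))\geq g(\lambda(B))$.

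Applying this at $x_0$ yields $g(\lambda(\varphi))(x_0) \geq g(\lambda(\psi))(x_0)$, while the hypothesis, valid at $x_0\in E$, gives $g(\lambda(\varphi))(x_0) \leq c\, g(\lambda(\psi))(x_0)$ with $c\in[0,1)$. Since $g>0$, combining these forces $1 \leq c < 1$, a contradiction. Hence $E = \emptyset$ and $\varphi \geq \psi$ throughout $X$. The only non-routine ingredient is the Loewner-monotonicity of $g$, which is where the symmetry, ellipticity, and convexity assumptions on $(\Gamma,g)$ are used; everything else is the standard minimum principle.
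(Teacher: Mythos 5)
Your proof is correct and follows essentially the same route as the paper: locate the minimum of $\varphi-\psi$, use the second-order condition to compare $\omega+dd^c\varphi$ and $\omega+dd^c\psi$ there, deduce $g(\lambda(\varphi))\geq g(\lambda(\psi))$ from ellipticity, and contradict the hypothesis since $c<1$ and $g>0$. You merely spell out the Loewner-monotonicity step that the paper subsumes under ``maximum principle plus ellipticity.''
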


\begin{proof}
	Since $X$ is compact and $\varphi-\psi$ is continuous, the minimum is attained at some $x_0\in X$. By the maximum principle, we have $\lambda(\varphi) \geq \lambda(\psi)$ at $x_0$. The ellipticity condition on $g$ thus yields $g(\lambda(\varphi))\geq g(\lambda(\psi))$.  It thus follows  from the assumption that $x_0\notin (\varphi-\psi<0)$, hence $\varphi\geq \psi$. 
\end{proof}


\begin{thebibliography}{99}

 \bibitem[ADO22]{ADO22} S.Abja, S.Dinew, G.Olive, 
 \emph{Uniform estimates for concave homogeneous complex degenerate elliptic equations comparable to the Monge-Amp\`ere equation.} Potential Anal. (2022). 


%  \bibitem[BT76]{BT76} E.~Bedford, B.A.~Taylor,  \emph{The Dirichlet problem for the complex Monge-Amp\`ere operator}.  Invent. Math., 37 (1976), 1--44.

  
%  \bibitem[BT82]{BT82} E.~Bedford, B.A.~Taylor,  \emph{A new capacity for plurisubharmonic  functions}.  Acta Math. \textbf{149} (1982), no.~1-2, 1--40.  
  
  
  
 \bibitem[Ber19]{Ber19} R.~J. Berman, 
 \emph{From {M}onge-{A}mp\`ere equations to envelopes and  geodesic rays in the zero temperature limit}.  
 Math. Z. \textbf{291} (2019),  no.~1-2, 365--394.  

  
  
 \bibitem[Blo11]{Blo11} Z. Blocki, 
\emph{On the uniform estimate in the Calabi-Yau theorem, II.}
 Sci. China Math. 54 no. 7 (2011), 1375--1377. 
 
  \bibitem[Blo12]{Blo12} Z. Blocki, 
\emph{The Calabi-Yau theorem.}
 Complex Monge-Amp\`ere equations and geodesics in the space of K\"ahler metrics, 201--227,
  Lecture Notes in Math., 2038, Springer, 2012.

 
\bibitem[BEGZ10]{BEGZ10}
S.~Boucksom, P.~Eyssidieux, V.~Guedj,  A.~Zeriahi, 
\emph{Monge-{A}mp\`ere  equations in big cohomology classes}. 
Acta Math. \textbf{205} (2010), no.~2, 199--262. 
 
  
 

 \bibitem[CC21]{CC21} X.X.~Chen, J.~Cheng, 
\emph{On the constant scalar curvature K\"ahler metrics (I)-A priori estimates.}
J. Amer. Math. Soc. 34 (2021), no. 4, 909--936. 

 \bibitem[CX24]{CX24} J.~Cheng, Y.Xu, 
\emph{Viscosity solution to complex Hessian equations on compact Hermitian manifolds.}
Preprint arXiv:2406.0095.

 \bibitem[CZ19]{CZ19} J.Chu, B.Zhou,
\emph{Optimal regularity of plurisubharmonic envelopes on compact Hermitian manifolds}.
Sci. China Math. 62 (2019), no. 2, 371--380.

 

     \bibitem[DP10]{DP10}  J.P. Demailly, N. Pali,
   \emph{ Degenerate complex Monge-Amp\`ere equations over compact K\"ahler manifolds}.
Internat. J. Math. 21 (2010), no. 3, 357--405.

 
 \bibitem[DDL21]{DDL21} T. Darvas, E. Di Nezza, C.H. Lu, 
 \emph{Log-concavity of volume and complex Monge-Amp\`ere equations with prescribed singularity}.
Math. Ann. 379, No. 1-2, 95--132 (2021).
 
  \bibitem[DnGG23]{DnGG23} E.~Di~Nezza, V.~Guedj, H.~Guenancia,
\emph{ Families of singular K\"ahler-Einstein metrics.}
 J. Eur. Math. Soc.  25 (2023), no. 7, 2697--2762.

  

 \bibitem[DK12]{DK12} S.~Dinew, S.Ko{\l}odziej,
   \emph{ Pluripotential estimates on compact Hermitian manifolds}. 
   Advances in geometric analysis, 69--86, Adv. Lect. Math. (ALM), 21, Int. Press, 2012. 
  
 
 
     \bibitem [EGZ08] {EGZ08} P.~Eyssidieux, V.~Guedj, A.~Zeriahi,
\emph{  A priori $L^{\infty}$-estimates for degenerate complex Monge-Amp{\`e}re equations},
 I.M.R.N., Vol. { 2008}, Article ID rnn070, 8 pages.
 

\bibitem[EGZ09]{EGZ09} P.~Eyssidieux, V.~Guedj, A.~Zeriahi, 
\emph{Singular {K}\"{a}hler-{E}instein  metrics}. 
J. Amer. Math. Soc. \textbf{22} (2009), no.~3, 607--639.



\bibitem[GL25]{GL25} V.~Guedj, C.~H. Lu, 
\emph{Quasi-plurisubharmonic envelopes 1: bounds on Monge-Amp\`ere volumes}. 
 Journal of the E.M.S, to appear (2025).
 
 \bibitem[GL23]{GL23} V.~Guedj, C.~H. Lu, 
\emph{Quasi-plurisubharmonic envelopes 3: Solving Monge-Ampère equations on hermitian manifolds}. 
J. Reine Angew. Math. 800 (2023), 259--298.
 
  \bibitem[GL25b]{GL25b} V.~Guedj, C.~H. Lu, 
 \emph{Degenerate complex Hessian equations on compact Hermitian manifolds}. 
Pure Appl. Math. Q. 21 (2025), no. 3, 1171--1194. 


 
 
 \bibitem[GZ17]{GZbook} V.~Guedj, A.~Zeriahi, 
\emph{Degenerate complex {M}onge-{A}mp\`ere  equations}. 
EMS Tracts in Mathematics, vol.~26, European Mathematical Society  (EMS), Z\"{u}rich, 2017. 


   \bibitem[GT24]{GT24} V.~Guedj, T.D.T\^o,
\emph{K\"ahler families of Green's functions},
Preprint arXiv:2405.17232.
  To appear in Journal de l'\'Ecole polytechnique - Math\'ematiques.
  
  \bibitem[GPT23]{GPT23} B. Guo, D.H. Phong, F. Tong, 
\emph{On $L^{\infty}$-estimates for complex Monge-Amp\`ere equations.}
 Ann. of Math. (2) 198 (2023), no. 1, 393--418. 
  
  
    \bibitem[GP24]{GP24} B. Guo, D.H. Phong,  
  \emph{On $L^{\infty}$ estimates for fully nonlinear partial differential equations}.  
  Annals of Mathematics 200 (2024), 365--398.
  
  
      \bibitem[GPSS24]{GPSS24} B. Guo, D.H. Phong, J.Sturm, J.Song,
  \emph{Diameter estimates in K\"ahler geometry. }
  Comm. Pure Appl. Math. 77 (2024), no. 8, 3520--3556. 
  
  
       \bibitem[GPSS23]{GPSS23} B. Guo, D.H. Phong, J.Sturm, J.Song,
  \emph{Sobolev inequalities on K\"ahler spaces},
Preprint arXiv:2311.00221.
  
 

\bibitem[HL23]{HL23} R.~Harvey, B.~Lawson, 
\emph{Determinant majorization and the work of Guo-Phong-Tong and Abja-Olive}. 
Calc. Var. (2023) 62:153.
 
 
  
  \bibitem[Kol98]{Kol98} S.~Ko{\l}odziej, 
  \emph{The complex {M}onge-{A}mp\`ere equation}. 
  Acta Math.  \textbf{180} (1998), 69--117.
  
  
  \bibitem[KN15]{KN15} S.~Ko{\l}odziej, N.C.Nguyen,
\emph{ Weak solutions to the complex Monge-Amp\`ere equation on compact Hermitian manifolds}. 
Contemp. Math. 644 (2015), 141--158
  
 

   \bibitem[KN19]{KN19} S.~Ko{\l}odziej, N.C.Nguyen,
 \emph{ Stability and regularity of solutions of the Monge-Ampère equation on Hermitian manifolds}. 
Adv. Math. 346 (2019), 264--304. 

  
     \bibitem[KN21]{KN21} S.~Ko{\l}odziej, N.C.Nguyen,
 \emph{Continuous solutions to Monge-Amp\`ere equations on Hermitian manifolds for measures dominated by capacity.}
  Calc. Var. Partial Differential Equations 60 (2021), no. 3, Paper No. 93, 18 pp. 

 
   \bibitem [Pan23]{Pan23} C.-M.Pan,
 \emph{Families of singular Chern-Ricci flat metrics.}  
J. Geom. Anal. 33 (2023), no. 2, Paper No. 66, 32 pp.
 
  \bibitem [Q24]{Q24} Y.Qiao,
 \emph{Sharp $L^{\infty}$-estimates for fully non-linear elliptic equations on compact complex manifolds.}
 Preprint arXiv:2409.05157.
 
 
  \bibitem [Szek18]{Szek18} G. Sz\'ekelyhidi,  
 \emph{ Fully non-linear elliptic equations on compact Hermitian manifolds}.  
 J. Differential Geom. 109 (2018), no. 2, 337--378.
 
   \bibitem [Szek24]{Szek24} G. Sz\'ekelyhidi,  
 \emph{ Singular K\"ahler-Einstein metrics and RCD spaces},
 Preprint arXiv:2408.10747. 
 
  
   \bibitem [Tos18]{Tos18} V.Tosatti,  
\emph{  Regularity of envelopes in K\"ahler classes}. 
M.R.L. 25 (2018), no1,  281--289.
 
  \bibitem [TW10]{TW10} V.Tosatti, B. Weinkove,
\emph{  The complex Monge-Ampère equation on compact Hermitian manifolds}. 
 J. Amer. Math. Soc. 23 (2010), no. 4, 1187--1195.
 
   \bibitem [V24]{V24} V.Vu,
\emph{Uniform diameter estimates for K\"ahler metrics},
 Preprint arXiv:2405.14680.
 
   \bibitem [WWZ21]{WWZ21} J.Wang, X.-J.Wang, B.Zhou,
\emph{A priori estimate for the complex Monge-Amp\`ere equation.}
Peking Math. J. 4 (2021), no. 1, 143--157.
 
  
\bibitem [Yau78]{Yau78}  S.~T.~Yau,
\emph{On the Ricci curvature of a compact K{\"a}hler manifold and the complex Monge-Amp{\`e}re equation. I}. 
 Comm. Pure Appl. Math. {\bf 31} (1978), no. 3, 339--411.  
 
  

\end{thebibliography}
\end{document}